%% file: main.tex
\documentclass[a4paper,11pt,leqno]{article}
\usepackage[T1]{fontenc}
\usepackage{lmodern}
\usepackage{amsmath,amsfonts,bm, amsthm}
\usepackage{url}
\usepackage{float}
\usepackage{tikz}
\usetikzlibrary{patterns}
\usepackage{comment}
\newtheorem{conjecture}{Conjecture}
\usepackage[backref]{smacros}

\title{On Sets of Monochromatic Objects in Bicolored Point Sets}

\author{
Sujoy Bhore\thanks{Department of Computer Science, Indian Institute of Technology Bombay, Mumbai, India.\\ Email: \textcolor{gray}{sujoy@cse.iitb.ac.in}}
\quad
Konrad Swanepoel\thanks{Department of Mathematics, London School of Economics, London, UK.\\ Email: \textcolor{gray}{konrad.swanepoel@gmail.com}}
}
\date{}

\begin{document}

\maketitle

\begin{abstract}
Let $P$ be a set of $n$ points in the plane, not all on a line, each colored \emph{red} or \emph{blue}. 
The classical Motzkin--Rabin theorem guarantees the existence of a \emph{monochromatic} line. 
Motivated by the seminal work of Green and Tao (2013) on the Sylvester-Gallai theorem, we investigate the quantitative and structural properties of monochromatic geometric objects, such as lines, circles, and conics.

We first show that if no line contains more than three points, then for all sufficiently large $n$ there are at least $n^{2}/24 - O(1)$ monochromatic lines.  
We then show a converse of a theorem of Jamison (1986): Given $n\ge 6$ blue points and $n$ red points, if the blue points lie on a conic and every line through two blue points contains a red point, then all red points are collinear. 
We also settle the smallest nontrivial case of a conjecture of Mili\'cevi\'c (2018) by showing that if we have $5$ blue points with no three collinear and $5$ red points, if the blue points lie on a conic and every line through two blue points contains a red point, then all $10$ points lie on a cubic curve.
Further, we analyze the random setting and show that, for any non-collinear set of $n\ge 10$ points independently colored red or blue, the expected number of monochromatic lines is minimized by the \emph{near-pencil} configuration. Finally, we examine monochromatic circles and conics, and exhibit several natural families in which no such monochromatic objects exist.
\end{abstract}

\section{Introduction}
Incidence problems lie at the heart of combinatorial geometry, which fundamentally studies how points and lines in the plane relate to one another.
Among its earliest and most influential results is the \emph{Sylvester–Gallai theorem}, proved by Gallai~\cite{gallai1944solution}, by resolving a question posed nearly forty years earlier by Sylvester~\cite{sylvester1893mathematical} in 1893.

\begin{theorem}[Sylvester--Gallai theorem]\label{thm:SG}
Suppose that $P$ is a finite set of points in the plane, not all on one line.  
Then there exists an \textbf{ordinary line} spanned by $P$, i.e., a line containing exactly two points of $P$.
\end{theorem}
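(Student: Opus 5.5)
We plan to use Kelly's extremal argument. The set $P$ is finite and not all on one line, so the set of pairs $(p,\ell)$ with $\ell$ a line spanned by $P$ and $p\in P\setminus\ell$ is finite and nonempty. Among all such pairs we choose one for which the Euclidean distance $d(p,\ell)>0$ is as small as possible. We then show that this $\ell$ is an ordinary line. Suppose instead that $\ell$ contains at least three points of $P$; we aim for a contradiction.

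Let $q$ be the foot of the perpendicular from $p$ to $\ell$. The point $q$ splits $\ell$ into two closed rays, and each of them contains $q$. Since $\ell$ carries at least three points of $P$, the pigeonhole principle puts two of them, say $a$ and $b$, on the same closed ray. We label them so that $a$ lies between $q$ and $b$, with $a=q$ allowed. Now consider the line $m=pb$, which is spanned by $P$, together with the point $a$. First, $a\notin m$: the lines $m$ and $\ell$ meet only at $b$, and $a\neq b$. So $(a,m)$ is an admissible pair.

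The key step, and the only real content, is to show that $d(a,m)<d(p,\ell)$. We would do this with similar triangles, or equivalently an area comparison. In the right triangle $pqb$, the distance from $q$ to the hypotenuse $pb$ is $|pq|\cdot|qb|/|pb|$. This is strictly less than $|pq|=d(p,\ell)$, because $|qb|<|pb|$ (as $p\neq q$). Moving along the segment $qb$ towards $b$, the distance to the line $pb$ decreases linearly down to $0$ at $b$. Since $a$ lies on the segment $qb$, we get $d(a,m)\le d(q,m)<d(p,\ell)$. This contradicts minimality, so $\ell$ contains exactly two points of $P$.

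The main obstacle is the degenerate bookkeeping rather than any deep idea: handling $a=q$, making sure the two points chosen on the same side really exist (using closed rays so that $q$ itself may be a point of $P$), and checking that $a\notin m$. We would state these carefully. The rest is the elementary distance inequality above, and we would not need any of the paper's later results.
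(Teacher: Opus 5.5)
Your proof is correct. It is L.~M.~Kelly's minimal-distance argument, and the degenerate cases you flag all work out. Taking $a$ to be whichever of the two same-ray points is nearer to $q$ forces $b\neq q$, so the right triangle $pqb$ is nondegenerate. Next, $a\notin pb$ because $pb\cap\ell=\{b\}$. Finally, if $\theta$ is the angle between $\ell$ and $pb$, then $d(a,pb)=|ab|\sin\theta\le|qb|\sin\theta=|pq|\,|qb|/|pb|<|pq|$.

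The paper does not prove Theorem~\ref{thm:SG}; it quotes it as classical background with a citation to Gallai. The proof it singles out is Melchior's. That proof dualizes to a line arrangement in the real projective plane and applies Euler's formula to obtain $\sum_{i\ge 2}(3-i)t_i\ge 3$, hence at least three ordinary lines.

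The two routes buy different things. Your metric argument is shorter and uses nothing beyond Euclidean distance and order, but it produces only one ordinary line and no counting information. Melchior's topological argument produces a linear inequality among the $t_i$. That inequality is exactly what the paper reuses later, alongside Langer's inequality, in the proof of Theorem~\ref{thm:random}. Your argument could not substitute for it there.

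Both proofs depend on real structure: the metric and order of $\mathbb{R}^2$ in your case, and the Euler characteristic of the real projective plane in Melchior's. This is unavoidable, since the statement fails over $\mathbb{C}$; the nine inflection points of a smooth complex cubic are a counterexample.
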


Several elegant proofs of the Sylvester–Gallai theorem are known.
For instance, Melchior’s classical proof~\cite{melchior1941vielseite}, based on projective duality and Euler’s formula, shows that every non-collinear set of $n$ points spans at least three ordinary lines.
Subsequent work focused on obtaining stronger quantitative bounds: Motzkin~\cite{motzkin1951lines} established a lower bound of order $\sqrt{n}$; Kelly and Moser~\cite{kelly1958number} improved this to $3n/7$; and Csima and Sawyer~\cite{csimasawyer1993} further strengthened it to $6n/13$ for $n>7$, building on ideas of Hansen~\cite{hansen1981contributions}.
Comprehensive discussions of these results appear in the surveys of Borwein and Moser~\cite{borwein1990survey}, and Pach and Sharir~\cite{pach2009combinatorial}.
A major breakthrough came with the work of Green and Tao~\cite{greentao2013}, who proved that any sufficiently large non-collinear set of $n$ points in the plane determines at least $n/2$ ordinary lines, which confirmed the long-standing \emph{Dirac–Motzkin conjecture} for all large $n$, and characterized all extremal configurations. 

\paragraph{Colorful Sylvester--Gallai (Motzkin-Rabin Theorem).}
Given a finite configuration of points in the plane, it is natural to ask whether certain geometric or combinatorial properties are preserved under colorings of the points.
In particular, one may consider a two-coloring of a finite, non-collinear set of points, say, coloring each point either \emph{red} or \emph{blue}, and ask whether the existence of an ordinary line, as guaranteed by the \emph{Sylvester–Gallai theorem}, can still be ensured in a colorful instance of the point set. That is, can we always find an ordinary line determined by two points of one color? 
The answer to that question is negative, as can be seen from a simple example. Consider two intersecting lines $\ell_1$ and $\ell_2$: place all red points on $\ell_1$, all blue points on $\ell_2$, and say the color of the intersection point of $\ell_1$ and $\ell_2$ is red. In this configuration, every line containing at least two points of one color also contains a further point; hence, no \emph{ordinary monochromatic line} exists.
This observation naturally leads to the following question: even if no ordinary monochromatic line exists, must there always exist at least one line containing points of only one color?
The following classical result, known as the \emph{Motzkin–Rabin theorem} (see~\cite{borwein1990survey, erdHos1996extremal}), provides an affirmative answer to this question and may be viewed as a natural colored analogue of the Sylvester–Gallai theorem.

\begin{theorem}[Motzkin--Rabin theorem]\label{thm:MR}
Let $S$ be a finite, non-collinear set of points in the plane, each colored red or blue.  
Then there exists a line $\ell$ passing through at least two points of $S$, all of which have the same color.
\end{theorem}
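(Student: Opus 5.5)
We will argue via projective duality and Euler's formula, following Melchior's approach to Theorem~\ref{thm:SG}. First the degenerate case: if one color class, say blue, has at most one point, then some line spanned by the red points avoids that blue point, and it is monochromatic. (If the red points are not all collinear, pick two red lines; at most one of them can contain the blue point, since two distinct red lines through the blue point would meet in a red point. If the red points are collinear, the blue point is off their line.) So we may assume both classes have at least two points, and we suppose for contradiction that every spanned line is bichromatic.

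Next we dualize in the projective plane. Each point of $S$ becomes a line, giving a line arrangement $\mathcal{A}$ whose lines are colored red and blue and are not all concurrent. Each spanned line of $S$ becomes a vertex of $\mathcal{A}$ where at least two lines meet. The hypothesis says that every vertex of $\mathcal{A}$ lies on lines of both colors. The goal is to show this is impossible, i.e.\ some vertex lies only on lines of one color.

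The main step is the counting argument. Place the arrangement on the projective plane, where the cell complex has Euler characteristic $1$. Writing $t_k$ for the number of vertices of multiplicity $k$, Melchior's computation gives $\sum_k (3-k)t_k \ge 3$. Here we need a colored refinement. Consider the arrangement of red lines alone, $\mathcal{A}_R$, and the arrangement of blue lines alone, $\mathcal{A}_B$. Every vertex of $\mathcal{A}_R$ must also carry a blue line. Since no blue line is parallel-coincident with a red one, each blue line meets every red line exactly once, so its intersections with the red lines at red vertices are constrained. We would count incidences: each blue line meets the $r$ red lines in points, and each red vertex absorbs at least two of these meetings for at least one blue line. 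Combining this with the Euler-type inequality applied to the red arrangement yields a contradiction through double counting.

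We expect this colored Euler counting to be the main obstacle. It is delicate, and an alternative is the known linear-algebra proof due to Chakerian or Motzkin: place the points as vectors $p_i\in\mathbb{R}^3$ and assume that every line through two red points contains a blue point, and vice versa. Then derive a system of linear dependencies that forces the red points into the span of the blue points, and the blue points into the span of the red points, in a way that makes all points collinear. The cleanest version we would try first is Chakerian's extremal argument, which is the analogue of Kelly's proof. Take a monochromatic-red line $\ell$ or a red pair together with a point at minimal distance, pick among all pairs (spanned line $L$, point $q\notin L$) the one minimizing the distance, and derive a contradiction using the foot of the perpendicular together with the colors on $L$. The colors are what require care, so we would fall back on the Euler-formula dualization if this fails.
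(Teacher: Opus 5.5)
\textbf{There is a genuine gap.} The paper does not prove this theorem itself. It cites two dual proofs: Motzkin's minimal-area argument, and Chakerian's derivation from Cauchy's lemma for planar graphs with two-coloured edges. Your first route (dualize, then use Euler's formula) is the frame of Chakerian's proof. However, the step that carries all the content is missing: you never exhibit the ``colored refinement'', and the ingredients you list cannot obviously supply it. Melchior's inequality for the red sub-arrangement, plus a count of where blue lines cross red vertices, records only multiplicities and which colours occur at each vertex. It discards the cyclic order of colours around vertices and faces, which is exactly where the real-plane topology interacts with the colouring, and no double count is actually produced.

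The fallbacks do not close the gap either. Chakerian's proof is not linear-algebraic. A Kelly-type minimal-distance argument breaks at precisely the colour step. Minimizing over all pairs $(L,q)$ only yields an ordinary line, which may be red--blue. If you minimize only over lines containing two points of one colour, Kelly's step produces the line $qp_2$, which need not be of that type (e.g.\ $q$ red, $p_2$ blue, and $qp_2$ an ordinary line), so minimality gives no contradiction. There is also a small slip in your degenerate case: two distinct lines spanned by red points need not meet in a red point, so choose them through a common red point. With the argument below, the degenerate case is unnecessary anyway.

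\textbf{The missing idea is to count colour changes at corners.} Work with the dual arrangement in the projective plane, with $V$ vertices, $E$ edges and $F$ faces.
\begin{itemize}
\item We have $V-E+F=1$.
\item Since the lines are not concurrent, every face is a polygon with $s_f\geq 3$ sides.
\item Each edge inherits the colour of its line.
\end{itemize}
A corner is a pair of consecutive edges at a vertex, equivalently a pair of consecutive sides of a face. Call a corner a change if its two edges have different colours.

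At a vertex, let the lines through it have colours $c_1,\dots,c_k$ in angular order. The $2k$ edges around the vertex then have colours $c_1,\dots,c_k,c_1,\dots,c_k$. So the number of changes at the vertex is twice the number of changes in the cyclic sequence $c_1,\dots,c_k$. That number is even, and positive when both colours occur, so every bichromatic vertex has at least $4$ changes.

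Around a face with $s$ sides, the number of changes is even and at most $s$, hence at most $2s-4$.

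Now suppose every vertex were bichromatic. Counting changes by vertices and by faces gives $4V\leq\sum_f(2s_f-4)=4E-4F$, i.e.\ $V-E+F\leq 0$, a contradiction. So some vertex is monochromatic, which dually is a monochromatic spanned line. This is what your Euler-formula route needs in place of the sketched double count.
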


There are two essentially different proofs of the Motzkin--Rabin theorem known in the literature, both established via the projective dual form of the statement.  
In the projective plane, the dual version asserts that for any finite nonconcurrent set of lines, each colored red or blue, there exists a point where all intersecting lines share the same color.  
The first proof, due to Motzkin (see~\cite{grunbaum1999monochromatic}) and published in~\cite{borwein1990survey, erdHos1996extremal}, is based on a minimal-area argument.  
The second proof, due to Chakerian~\cite{chakerian1970sylvester} (see also~\cite{edmonds1980solution}), derives the result from a version of Cauchy’s lemma on planar graphs with colored edges, which itself follows essentially from Euler’s formula. Later, Pretorius and Swanepoel~\cite{pretorius2004algorithmic} gave an algorithmic proof for Motzkin--Rabin theorem (see also~\cite{coxeter1948problem}). 

Motivated by the seminal work of Green and Tao~\cite{greentao2013} and several other foundational studies~\cite{burr1974orchard, melchior1941vielseite, kelly1958number, van2007enumerating, elekes2000convexity, csimasawyer1993, crowe1968sylvester} (see also Section~\ref{related-work}), we turn to structural questions related to the Motzkin–Rabin theorem.
Although the colorful version of the Sylvester–Gallai problem may appear quite different, especially in view of the discussion above on ordinary lines, it is natural to ask whether similar combinatorial phenomena still occur.
In particular, we are interested in understanding to what extend the geometric and algebraic structure that appears in the uncolored case continues to manifest in its colored counterpart.

\begin{quote}
\begin{center}
\emph{Given a finite set of red and blue points in the plane, does there necessarily exist a large number of monochromatic lines? More generally, what happens if we replace lines with other geometric objects, such as circles or conics, and do similar phenomena persist?}
\end{center}
\end{quote}

\subsection{Our Contributions}
In this work, we provide a structural and quantitative understanding of the above question. As discussed earlier, without imposing any restriction on collinearities, one cannot hope to obtain non-trivial lower bounds on the number of monochromatic lines. We show that under the assumption that no line contains more than three points, every two-colored set of $n$ points in the plane (not all on a line) determines at least $n^{2}/24 - O(1)$ monochromatic lines for all sufficiently large $n$. Moreover, we establish a matching structural characterization: if the number of monochromatic lines is at most $n^{2}/24 + K n$, then, after adding or deleting only $O(K)$ points, the set is contained in a coset of a finite subgroup of a cubic curve that is either smooth or acnodal. This yields a near-complete description of all near-extremal configurations 
(Theorem~\ref{thm:1}).

We next establish a converse to an earlier theorem of Jamison~\cite{jamison1986few}, by showing that if $n\ge 6$ blue points lie on a conic and every line determined by two blue points contains a red point, then all red points must be collinear (Theorem~\ref{thm:conic-line}). For the extremal case $n=5$, we further prove that the same condition forces all $2n$ points to lie on a cubic curve (Theorem~\ref{thm:Milicevic}), thereby resolving the smallest nontrivial case of a conjecture of Mili\'cevi\'c~\cite{milicevic2018}. To the best of our knowledge, this converse direction has not been previously examined.

Next, in Theorem~\ref{thm:random}, we show that if each point of a non-collinear set of $n \ge 10$ points is independently colored red or blue uniformly at random, then the configuration that minimizes the expected number of monochromatic lines is the \emph{near-pencil} (that is, $n-1$ points on a line and one point off the line; see Figure~\ref{fig:near-pencil}).

Already, in 1951, Motzkin \cite{motzkin1951lines} proved that for any finite set of points in the plane, not all lying on a line or a circle, determines an \emph{ordinary circle}, namely, a circle incident to exactly three points of the set.
Later, Elliott~\cite{elliott1967} proved in 1967 
quadratic lower bound on the number of ordinary circles.
One might expect an analogous phenomenon to hold in the monochromatic setting. Surprisingly, we show that this intuition fails: there exists an infinite family of configurations with neither color class contained in a circle or a line and with no monochromatic circle. 
We further demonstrate that a similar failure of ordinary monochromatic objects persists for conics as well. Specifically, we show that there exists an infinite family of configurations with neither color class contained in a conic and with no monochromatic conic.

\section{Monochromatic Lines}
We first investigate how the assumption that at most $k$ points lie on any line affects the number of monochromatic lines in a set of $n$ points, each colored red or blue.
If $k=2$, then the number of monochromatic lines equals $\binom{b}{2}+\binom{r}{2}$, where $b$ is the number of blue points and $r$ the number of red points, and this is minimized when $b$ and $r$ are equal.
Thus, the first non-trivial case is $k=3$, for which we first describe an example.
\begin{example}\label{ex:cubic}
Consider any smooth irreducible cubic curve $\gamma$ in the projective plane.
Note that no $4$ points on $\gamma$ are collinear.
It is well known that there is a group $(\gamma,\oplus)$ such that three points of $\gamma$ are collinear if and only if they sum to the identity in the group.
This group contains finite subgroups isomorphic to $\mathbb{Z}_n$ for all $n$.
If $n$ is even, let the blue points be the points corresponding to $\{0,2,4,\dots,n-2\}$, and the red points $\{1,3,5,\dots,n-1\}$.
Then the monochromatic lines in this set of $n$ points will always be blue, and will correspond to the total number of lines spanned by the $n/2$ blue points, which can be calculated as $n^2/24 + n/4 - O(1)$.
If we remove or add $O(K)$ points to this example, then the number of monochromatic lines will be $n^2/24 +O(Kn)$.
\end{example}

In the following theorem, we obtain a lower bound that is tight up to an additive $n/4$ term compared to the example above.

\begin{theorem}\label{thm:1}
In a set of $n$ points in the plane with at most $3$ points on a line, and with each point colored red or blue, there are at least $n^2/24-O(1)$ monochromatic lines if $n$ is sufficiently large.
Furthermore, for each $K\geq 1$ there exists $n_0$ such that for all $n\geq n_0$, if there are at most $n^2/24 +Kn$ monochromatic lines then, up to adding or removing $O(K)$ points, 
the set forms a coset of a finite subgroup of a cubic curve that is either smooth or acnodal.
\end{theorem}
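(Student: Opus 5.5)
The plan is to reduce the count to a statement about $3$-point lines, and then apply the Green--Tao machinery on sets with few ordinary lines. Let $P$ have $n$ points, $b$ blue and $r$ red, with no four collinear. Let $t_2$ be the number of ordinary lines and $t_3$ the number of $3$-point lines, so that $t_2+3t_3=\binom{n}{2}$.

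The first step is to count lines by colour pattern. A $3$-point line is monochromatic when it is BBB or RRR, and an ordinary line is monochromatic when it is BB or RR. Write $x_{ijk}$ for the number of $3$-point lines of each colour type. Counting blue pairs gives
\[
\binom{b}{2} = e_{BB} + 3x_{BBB} + x_{BBR},
\]
and similarly for red pairs, where $e_{BB}$ is the number of blue ordinary lines. Counting mixed pairs gives
\[
br = e_{BR} + 2x_{BBR} + 2x_{BRR}.
\]
Let $M$ be the number of monochromatic lines, so $M = e_{BB}+e_{RR}+x_{BBB}+x_{RRR}$. Eliminating variables from these identities yields
\[
M \;\ge\; \binom{b}{2}+\binom{r}{2} - \tfrac12 br + \tfrac12 e_{BR} - 2(x_{BBB}+x_{RRR}) + (\text{nonnegative terms}),
\]
to be checked carefully. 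The cleanest form I expect is
\[
M = \tfrac{1}{4}\Bigl(\binom{b}{2}+\binom{r}{2}\Bigr) + \tfrac{1}{4}\bigl(\text{quantities involving } t_2 \text{ and } x_{BBB}\bigr),
\]
together with the trivial bound $x_{BBB} \le \binom{b}{2}/3$. Optimising over $b+r=n$ gives $b=r=n/2$ and main term $\tfrac{n^2}{24}$. This matches Example~\ref{ex:cubic}, where $x_{BBB}\approx \tfrac{b^2}{6}$, $x_{RRR}=0$ and $t_2=O(n)$. So the target inequality is $M \ge \tfrac{n^2}{24} - O(1) + c\,t_2 - (\text{error})$, and the error must be controlled.

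The second step is to use the fact that $M$ small forces $t_2$ small. If $M \le \tfrac{n^2}{24}+Kn$, the linear relations should force $t_2 = O(Kn)$, since every ordinary line either is monochromatic or is a BR line, and BR lines feed into the identity with a positive coefficient. Once $t_2 \le K'n$, I would invoke the Green--Tao structure theorem (their Theorem 1.3). It says that, up to adding or removing $O(K')$ points, $P$ lies on a line, on a conic together with a line, or in a coset of a finite subgroup of a smooth or acnodal cubic. With at most $3$ points on a line, the line case and the conic-plus-line case contain a line with $\Omega(n)$ points, so they are excluded. The conic-plus-line case is excluded because the line itself would carry about $n/2$ points. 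What remains is the cubic structure, which proves the second part of the theorem.

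The third step upgrades this to the sharp constant $n^2/24 - O(1)$. On a coset $H\oplus x$ of a cyclic or cyclic-times-$\mathbb{Z}_2$ group of order about $n$, collinear triples are exactly the triples summing to a fixed element. I would count monochromatic triples for a two-colouring of a finite abelian group. Each point $p$ lies on about $n/2$ lines, and the lines through $p$ pair up the other points as $\{q, c-p-q\}$. Monochromatic triples through $p$ together with the pairs count toward the bound, so the question becomes the arithmetic problem of minimising the number of monochromatic Schur-like triples $a+b+c=0$ in a $2$-colouring of $\mathbb{Z}_m$. The classical bound there is about $m^2/24$ (Robertson--Zeilberger, Schoen, Datskovsky), and the minimum is attained by the parity colouring, which is Example~\ref{ex:cubic}. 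The $O(K)$ added or removed points change the count only by $O(Kn)$. To reach an additive $O(1)$ error, I would run the argument with $K$ a large constant. If the structure holds, the Schur-triple minimum plus an exact count gives at least $n^2/24 - O(1)$. If it fails, then $M > n^2/24 + Kn$ already.

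The main obstacle is twofold. First, the linear-algebra step must be checked: it has to show that few monochromatic lines genuinely forces $t_2 = O(n)$, with no configuration having many ordinary BR lines and still few monochromatic lines. Second, the monochromatic Schur-triple minimum must be proved with $O(1)$ error in groups $\mathbb{Z}_m$ and $\mathbb{Z}_m\times\mathbb{Z}_2$ for cosets, including near-cosets perturbed by $O(K)$ points. For the first part, I would try to use the Melchior-type inequality $t_2 \ge 3$ together with weighted counting.
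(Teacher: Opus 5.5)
Your Step 2 follows the paper's route: once $t_2=O(Kn)$, apply Green--Tao and exclude the line and conic-plus-line cases. The dichotomy in Step 3 ("structure holds, or $M>n^2/24+Kn$") is also the paper's. Step 1, however, is never carried out.

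Your first display is in fact an exact identity. Bounding its $-2(x_{BBB}+x_{RRR})$ term by $x_{BBB}\le\binom{b}{2}/3$ leads nowhere, and the ``cleanest form'' you guess has main term $n^2/16$ at $b=r=n/2$, not $n^2/24$. The right elimination removes the $3$-point lines altogether:
\[
M=\frac13\Bigl(\binom{b}{2}+\binom{r}{2}-\frac{br}{2}\Bigr)+\frac23(e_{BB}+e_{RR})+\frac16 e_{BR}\ \ge\ \frac{n^2}{24}-\frac n6+\frac{t_2}{6},
\]
using $b^2-br+r^2=\frac14n^2+\frac34(b-r)^2$. Every ordinary line, including every BR line, has coefficient at least $1/6$. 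So $M\le n^2/24+Kn$ gives $t_2\le(6K+1)n$ at once. Your first ``obstacle'' disappears, and Melchior plays no role.

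The genuine gap is Step 3. Once $S$ is only known to be within $O(K)$ points of a coset, computing $M$ on the coset and then correcting for the perturbation can cost order $Kn$. Taking $K$ to be a large constant does not turn $O(Kn)$ into $O(1)$. You flag this but give no way around it.

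The Schur-triple detour does not supply one. The Robertson--Zeilberger and Schoen bounds concern $x+y=z$ in $\{1,\dots,n\}$, where the constant is $1/22$ and is not attained by the parity colouring. In a finite abelian group, the number of monochromatic ordered solutions of $h_1+h_2+h_3=c$ equals $|B|^2-|B||R|+|R|^2$ for every colouring. On an exact coset that is just the identity above in disguise.

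The missing idea is not to count $M$ on the perturbed set at all. Keep the inequality $M\ge n^2/24-n/6+t_2/6$, which holds for every set, and transfer only the robust quantity $t_2$:
\begin{itemize}
\item A coset of order $m$ on $\gamma$ has $m-O(1)$ ordinary lines, all tangent to $\gamma$.
\item By the Pl\"ucker formulas, at most $6$ tangent lines of $\gamma$ pass through any given point.
\item Hence adding or removing $O(K)$ points still leaves $t_2\ge n-O(K)$.
\end{itemize}
The $-n/6$ is then absorbed by $t_2/6$. Applying this with $K=1$ (if $M>n^2/24+n$ there is nothing to prove) gives $M\ge n^2/24-O(1)$.
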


To prove this theorem, first we show the following general result on the number of monochromatic lines in abstract geometries with at most three points on a line.
Then, we combine it with Green and Tao's structure theorem for sets with few ordinary lines \cite{greentao2013} (Lemma~\ref{lemma:GT} below), to obtain the result. 

An \emph{abstract geometry} consists of a set of points together with a collection of subsets of the points, called lines, with the property that each line contains at least two points, and for any two points there is a unique line containing them.

\begin{lemma}\label{lemma:1}
In an abstract geometry with $n$ points and at most $3$ points on each line, if the points are each colored red or blue, then the number of monochromatic lines is at least $\frac{n^2}{24}-\frac{n}{6} + \frac{t_2}{6}$, where $t_2$ is the number of lines containing exactly $2$ points.   
\end{lemma}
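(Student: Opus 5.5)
The plan is a direct double count of pairs of points, sorted by the colour pattern of the line through them. Let $b$ and $r$ be the numbers of blue and red points, so $b+r=n$. Every line has $2$ or $3$ points. Split the $2$-point lines into $p$ blue--blue, $q$ red--red and $s$ mixed lines, so $t_2=p+q+s$. Split the $3$-point lines by colour pattern into $a$ of type BBB, $x$ of type BBR, $y$ of type BRR and $c$ of type RRR. The monochromatic lines are then exactly those counted by $p+q+a+c$. Call this number $M$.

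Since each pair of points lies on a unique line, we count blue--blue, red--red and blue--red pairs separately:
\begin{align*}
\tbinom{b}{2} &= p+3a+x, \\
\tbinom{r}{2} &= q+3c+y, \\
br &= s+2x+2y.
\end{align*}
The first two identities give $3a=\binom{b}{2}-p-x$ and $3c=\binom{r}{2}-q-y$. Substituting these into $3M = 3p+3q+3a+3c$ yields
\[
3M = 2p+2q+\tbinom{b}{2}+\tbinom{r}{2}-(x+y).
\]
The mixed-pair identity gives $x+y=(br-s)/2$. Hence
\[
3M = \tbinom{b}{2}+\tbinom{r}{2}-\tfrac{br}{2} + 2p+2q+\tfrac{s}{2}.
\]
The key point is that the mixed $3$-point lines, which are the non-monochromatic ones that "absorb" pairs, are controlled by the mixed-pair count. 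This is why the coefficients must be combined exactly as above.

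It remains to estimate each part. For the $2$-point lines, $2p+2q+\tfrac{s}{2}\ge \tfrac12(p+q+s)=\tfrac{t_2}{2}$. For the binomial part, using $b^2+r^2=n^2-2br$,
\[
\tbinom{b}{2}+\tbinom{r}{2}-\tfrac{br}{2} = \tfrac{n^2-3br-n}{2} \ge \tfrac{n^2}{8}-\tfrac{n}{2},
\]
since $br\le n^2/4$. Dividing by $3$ gives $M\ge \frac{n^2}{24}-\frac{n}{6}+\frac{t_2}{6}$, which is the bound in Lemma~\ref{lemma:1}.

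I expect no serious obstacle here. The only delicate step is choosing the right linear combination: eliminate $a$ and $c$ through the monochromatic pair counts, then eliminate $x+y$ through the mixed pair count.
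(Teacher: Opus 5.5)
Your proof is correct and is essentially the paper's argument. Both use the same three pair-counting identities and eliminate the mixed $3$-point lines to reach $6M=b^2-br+r^2-n+4p+4q+s$. Both then finish with $br\le n^2/4$ (which the paper writes as $\tfrac34(b-r)^2\ge 0$) together with $4p+4q+s\ge t_2$.
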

\begin{proof}
Let $b$ denote the number of blue points, $r$ the number of red points, and $t_{i,j}$ the number of lines containing $i$ blue points and $j$ red points, for each $i,j\geq 0$.
Then $b+r=n$, and by counting the number of pairs of blue points, the number of pairs of red points, and the number of blue-red pairs, we obtain
\begin{align}
    \binom{b}{2} &= 3t_{3,0} + t_{2,1} + t_{2,0}, \label{count1} \\
    \binom{r}{2} &= 3t_{0,3} + t_{1,2} + t_{0,2}, \label{count2} \\
    br &= 2t_{2,1} + 2t_{1,2} + t_{1,1}. \label{count3}
\end{align}
By eliminating $t_{2,1}$ and $t_{1,2}$ from \eqref{count1}, \eqref{count2} and \eqref{count3}, we obtain that the number of monochromatic lines is
\begin{align*}
    & \mathrel{\phantom{=}} t_{3,0} + t_{0,3} + t_{2,0} + t_{0,2}\\
    &= \frac{1}{6}\left(b^2-br+r^2-b-r+4t_{2,0}+4t_{0,2}+t_{1,1}\right)\\
    &= \frac{1}{6}\left(\frac{1}{4}(b+r)^2 - (b+r) +\frac{3}{4}(b-r)^2+4t_{2,0}+4t_{0,2}+t_{1,1}\right)\\
    &\geq \frac{1}{6}\left(\frac{1}{4}n^2 - n + t_2\right).\qedhere
\end{align*} 
\end{proof}

\begin{lemma}[Green--Tao {\cite[Theorem~1.5]{greentao2013}}]\label{lemma:GT}
For any $K \geq 1$ there exists $n_0$ such that for any set of $n\geq n_0$ points in the plane with at most $Kn$ ordinary lines, the set differs by at most $O(K)$ points from one of the following:
\begin{enumerate}
    \item $n+O(K)$ points on a line,
    \item $n/2+O(K)$ points on a conic and $n/2+O(K)$ points on a straight line not intersecting the conic,
    \item A coset of a subgroup of cardinality $n+O(K)$ of the group on the non-singular points of an irreducible cubic curve that is smooth or acnodal.
\end{enumerate} 
\end{lemma}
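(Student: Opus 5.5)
Since this lemma is quoted from Green and Tao~\cite{greentao2013}, in the paper it is simply invoked. If I had to reconstruct its proof, I would follow their three-stage strategy: a combinatorial stage through Euler's formula, an algebraic stage that puts almost all points on a cubic curve, and an additive-combinatorial stage that identifies the possible subsets of that cubic.

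\emph{Combinatorial stage.} First dualize, so that $P$ becomes an arrangement of $n$ lines. Melchior's identity $\sum_{k\ge2}(3-k)t_k \ge 3$ then shows that if $t_2\le Kn$, the lines with four or more points contribute at most $O(Kn)$ in total, weighted by $k$. Combining this with Euler's formula, the cell complex of the dual arrangement is, away from $O(K)$ exceptional lines, almost entirely made up of triangles meeting at triple points. Concretely, I would show that after removing $O(K)$ dual lines, every remaining dual line has only $O(K)$ "defects" along it, namely incidences with non-triangular faces or non-triple points.

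\emph{Algebraic stage.} On defect-free stretches of the arrangement one sees locally a triangular grid. The key tool would be the Chasles/Cayley--Bacharach theorem: if two cubics meet in nine points and a third cubic passes through eight of them, it passes through the ninth. Applying it to the configurations of nine points formed by three lines and three transversal lines in the grid, I would propagate the condition of lying on a cubic curve. Patching these local cubics together shows that all but $O(K)$ points of $P$ lie on a single cubic curve $\gamma$, possibly reducible. This is the step I expect to be the main obstacle. The difficulty is making the local-to-global patching quantitative, so that defects do not spread and the loss stays at $O(K)$ points rather than growing with $n$; here I would follow Green and Tao's organization by "good" triangular regions and count bad lines carefully.

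\emph{Classification stage.} Split according to the type of $\gamma$. If $\gamma$ is three lines or a line with a conic, a count of ordinary lines forces case~1 (a single line), or the conic-plus-line configuration with the points balanced $n/2+O(K)$ each. The line must avoid the conic, because otherwise the projective group structure forces many ordinary lines. If $\gamma$ is irreducible, its nonsingular points carry a group law in which three points are collinear if and only if they sum to zero. Having few ordinary lines then says that $P$ is almost closed under $x,y\mapsto -(x+y)$. A Freiman-type stability result (small doubling with almost-closure) shows that $P$ differs by $O(K)$ points from a coset of a finite subgroup. Finally, the subgroups available in the real plane rule out the cuspidal and crunodal cases, since there the finite subgroups are too small or would give too many ordinary lines. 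This leaves the smooth and acnodal cubics, which is case~3.
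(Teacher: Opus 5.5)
You are right that the paper gives no proof here. The lemma is Green and Tao's full structure theorem, quoted in a slightly weakened form: they need $n\ge\exp\exp(CK^C)$, and in case~2 they obtain exactly the B\"or\"oczky configuration. Your three-stage outline is their architecture: Melchior and Euler in the dual, Chasles/Cayley--Bacharach on triangular grids, then the group law with an almost-closure lemma. Your reason for excluding cuspidal and crunodal cubics is also correct: $(\mathbb{R},+)$ and $\mathbb{R}^*$ have no large finite subgroups.

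One step of your reconstruction would fail as written, and it is the step your patching rests on. Euler's formula together with Melchior's inequality bounds only the \emph{total} number of defects by $O(Kn)$; here defects are non-triangular faces weighted by their excess, and non-triple vertices. Averaging then shows that most dual lines have $O(K)$ defects. It does not show that all but $O(K)$ of them do. Counting alone cannot exclude, say, $\sqrt{n}$ points each lying on $\sqrt{n}$ ordinary lines, since that costs only about $n/2$ ordinary lines. The statement that all points outside an $O(K)$-set have only $O(K)$ defects is true only a posteriori, as a consequence of the theorem itself. Green and Tao therefore work from the averaged input in two steps. First, a single good dual line, cut into $O(K)$ triangular strips, gives via Chasles a covering of $P$ by $O(K)$ cubic curves (their weak structure theorem). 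Second, a separate argument upgrades this to all but $O(K)$ points on one cubic (their intermediate structure theorem). Your local-to-global patching has to be built on that weaker, averaged information, not on the uniform defect bound you state.
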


\begin{lemma}\label{lemma:2}
For any $K \geq 1$ there exists $n_0$ such that any set of $n\geq n_0$ points that differs by at most $O(K)$ points from a coset of a finite subgroup on an irreducible cubic curve determines at least $n-O(K)$ ordinary lines.
\end{lemma}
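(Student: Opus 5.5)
We first handle the exact coset $X = g\oplus H$, where $H\le(\gamma^{*},\oplus)$ is a finite subgroup of order $m$ of the group on the non-singular points of the irreducible cubic $\gamma$. Here three points are collinear iff they sum to $0$, and a line is tangent at $p$ and meets $\gamma$ again at $q$ iff $2p\oplus q=0$. For $x,y\in X$ with $x\neq y$, the third intersection of the line $xy$ with $\gamma$ is $z=\ominus(x\oplus y)$. Then $z\in X$ iff $3g\in H$ (since $z\in -2g\oplus H$). So there are two cases.

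\textbf{Case $3g\notin H$.} No three points of $X$ are collinear. Every one of the $\binom{m}{2}$ connecting lines is ordinary, which is far more than $n$.

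\textbf{Case $3g\in H$.} The line $xy$ is ordinary in $X$ iff $z\in\{x,y\}$, i.e.\ iff it is tangent at $x$ or at $y$. This happens iff $2x\oplus y=0$ or $x\oplus 2y=0$. For each $x\in X$ the point $y=\ominus 2x$ lies in $X$, and $y\neq x$ unless $3x=0$. At most $9$ points (the flexes, i.e.\ $3$-torsion) satisfy $3x=0$, and each line is counted at most twice. Hence $X$ spans at least $(m-9)/2$ ordinary lines.

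That gives only $m/2$, so I will refine the count. I claim the tangent lines at distinct points are distinct: a line meets $\gamma$ in $3$ points with multiplicity, so it can be tangent at only one point unless it is a flex line. Therefore the map $x\mapsto$ (tangent line at $x$) is injective on non-flex points, and there are $m-O(1)$ ordinary lines. (Indeed Green--Tao record exactly $n-O(1)$ ordinary lines for cosets.)

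Next comes the perturbation. Let $P$ satisfy $|P\triangle X|\le CK$ with $m=n+O(K)$. Consider the $m-O(1)$ tangent lines $\ell_x$ at non-flex $x\in X$, each meeting $X$ in exactly $\{x,-2x\}$. A removed point $p\in X\setminus P$ destroys at most $2$ such lines, namely $\ell_p$ and $\ell_{x}$ with $-2x=p$; in fact at most $4$ counting the solutions of $2x=-p$, which number at most $4$. An added point $q\notin X$ destroys $\ell_x$ only if $q\in\ell_x$. The key fact is that through a point $q$ of the plane pass at most $6$ tangent lines to an irreducible cubic: the class of a smooth cubic is $6$, and it is smaller for singular ones. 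For $q\in\gamma$ one can argue directly: $q\in\ell_x$ iff $q=-2x$, which gives at most $4$ solutions $x$ in the group. If $q\notin\gamma$, I use the dual-curve degree bound, which is at most $6$. So each added point kills $O(1)$ of the lines. Each surviving line still contains exactly the two points $x,-2x$ of $P$, provided neither was removed, and that case is already accounted for. Hence $P$ has at least $m-O(1)-O(CK)=n-O(K)$ ordinary lines, for $n\ge n_0$.

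The main obstacle I expect is the uniform bound on tangent lines through an arbitrary point for singular (acnodal, cuspidal or nodal) cubics, together with the fact that the number of flex and $3$-torsion points is $O(1)$ in every case. I would handle both via the bounded class of the dual curve and the structure of the $3$-torsion in $\mathbb{R}/\mathbb{Z}$, $\mathbb{R}/\mathbb{Z}\times\mathbb{Z}_2$ or $\mathbb{R}$, each of which has at most $9$ elements.
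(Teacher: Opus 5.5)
Your proof is correct and takes essentially the same route as the paper. The exact coset has $m-O(1)$ ordinary lines, all tangent to $\gamma$, and the bounded class of the cubic (at most $6$ tangents through any point, the paper's Pl\"ucker argument) shows that each added or removed point destroys only $O(1)$ of them. Your case split on whether $3g\in H$ is more careful than the paper, which tacitly assumes $3g\in H$ as in Green--Tao. Just state the perturbation step in the case $3g\notin H$ as well; it is immediate, since each added or removed point lies on at most $m$ of the $\binom{m}{2}$ ordinary lines.
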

\begin{proof}
It is well known (and explained in \cite{greentao2013}) that a coset $H\oplus x$ of a finite subgroup of order $m$ of the group on the cubic curve $\gamma$ is a set of $m$ points with at least $m-O(1)$ ordinary lines, and they are all tangent to $\gamma$.
It is also a well-known classical fact (the Pl\"ucker formulas) that through any fixed point $p$ in the plane, there are at most $6$ lines through $p$ tangent to $\gamma$ if $p\notin\gamma$, and at most $4$ tangent lines through $p$ if $p\in\gamma$.
Thus, if we remove a point in $H\oplus x$, we destroy at most $4$ of the $m-O(1)$ many tangent lines, and for each point that we add, we destroy at most $6$ of the original ordinary lines.
Thus, after removing and adding $O(K)$ points, at least $n-O(K)$ of the original ordinary lines of the coset are left untouched.
\end{proof}

We now proceed to the proof of the theorem.

\begin{proof}[Proof of Theorem~\ref{thm:1}]
We first show the second part of the theorem.
Let $S$ be a set of $n$ points in the plane, each colored red or blue, with at most $3$ points on a line and with at most $n^2/24+Kn$ monochromatic lines.
By Lemma~\ref{lemma:1}, the number of monochromatic lines is at least $n^2/24 -n/6 + t_2/6$, so \[n^2/24 + Kn \geq n^2/24 -n/6 + t_2/6,\] 
and $t_2 \leq (6K+1)n$.
By Lemma~\ref{lemma:GT}, $S$ differs by $O(K)$ points from a coset of a finite subgroup of a cubic curve that is smooth or acnodal.

It remains to show the first part of the theorem.
Suppose that $S$ has at most $n^2/24+n$ monochromatic lines.
If we let $K=1$ in the above, then we deduce that $S$ differs by $O(1)$ points from a coset, and by Lemma~\ref{lemma:2}, $t_2\geq n - O(1)$,
from which it follows (again using Lemma~\ref{lemma:1}) that the number of monochromatic lines is at least $n^2/24-O(1)$.
\end{proof}

For any $2$-coloring of the construction on an irreducible cubic curve in the third case of Lemma~\ref{lemma:GT}, it can be checked that there are at least $n^2/24$ monochromatic lines, with equality when the number of red and blue points is  exactly the same.
Thus, in this case, there is a subquadratic randomised algorithm for finding such a \emph{monochromatic} line, merely by guessing and checking.
This holds for any configuration with superlinearly many monochromatic lines.

Instead of assuming that there are at most $3$ points on a line, we can assume there are at most $k$ points on a line for some small value of $k$, and try to do the same counting as above.
We again obtain a quadratic number of monochromatic lines, as long as the number of blue and red points are sufficiently unbalanced.
The exact statement is in Corollary~\ref{cor:prep-k-line}, and needs the following technical result.

\begin{proposition}\label{prep-k-line}
In a set of $n$ points in the plane with at most $k$ points on a line, the number of monochromatic lines is at least \[\frac{b^2-(k-2)br+r^2-n}{k(k-1)},\] where $b$ denotes the number of blue points and $r$ the number of red points.
\end{proposition}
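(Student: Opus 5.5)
We generalize the double counting from the proof of Lemma~\ref{lemma:1}. For each line $\ell$ spanned by the point set, let $i=i(\ell)$ and $j=j(\ell)$ be the numbers of blue and red points on $\ell$, so that $2\le i+j\le k$. We count pairs of points and rewrite the target quantity $b^2-(k-2)br+r^2-n$ as a sum of contributions from individual lines. Since every pair of points lies on exactly one line, and using $b+r=n$,
\[
b^2-(k-2)br+r^2-n = 2\binom{b}{2}+2\binom{r}{2}-(k-2)br = \sum_{\ell}\Bigl(i(i-1)+j(j-1)-(k-2)ij\Bigr).
\]
So it suffices to show the following per-line inequality:
\[
f(i,j):=i(i-1)+j(j-1)-(k-2)ij \le k(k-1)\,[\ell \text{ monochromatic}].
\]
Summing this over all lines, the left side becomes $b^2-(k-2)br+r^2-n$ and the right side becomes $k(k-1)$ times the number of monochromatic lines, which is the claim.

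\emph{Monochromatic lines.} If $j=0$, then $f=i(i-1)\le k(k-1)$ because $i\le k$. The case $i=0$ is symmetric.

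\emph{Bichromatic lines.} Now suppose $i,j\ge1$; we need $f(i,j)\le 0$. Let $s=i+j\le k$. Then
\[
f = s^2 - s - k\,ij .
\]
For a fixed $s$, the product $ij$ subject to $i,j\ge1$ is minimized at the extreme split, where $ij=s-1$. This gives
\[
f \le s^2 - s - k(s-1) = (s-1)(s-k) - k + s\cdot 0 ,
\]
and more carefully $s^2-s-ks+k = (s-1)(s-k)$. This is at most $0$ because $2\le s\le k$.

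The only step needing care is this bichromatic case. It rests on the fact that a line with both colours present satisfies $ij\ge i+j-1$, equivalently $(i-1)(j-1)\ge 0$. No geometric input such as Sylvester–Gallai is needed, so the argument works in any abstract geometry with at most $k$ points per line.

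As a check, for $k=3$ the bound becomes $(b^2-br+r^2-n)/6$. This agrees with the expression in Lemma~\ref{lemma:1} once the nonnegative $t$-terms are dropped.
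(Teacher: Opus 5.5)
Your proof is correct and is essentially the paper's argument. The paper also writes $\binom{b}{2}+\binom{r}{2}-\lambda br$ as a sum of per-line contributions and takes $\lambda=(k-2)/2$, so that bichromatic lines contribute at most $0$ and monochromatic ones at most $\binom{k}{2}$. Your identity $f=s^2-s-kij\le (s-1)(s-k)$, via $(i-1)(j-1)\ge 0$, supplies the justification the paper only asserts, namely that the bichromatic maximum occurs at $\{i,j\}=\{k-1,1\}$. Do delete the garbled intermediate expression $(s-1)(s-k)-k+s\cdot 0$, which does not equal $s^2-s-k(s-1)$; the identity $s^2-s-ks+k=(s-1)(s-k)$ you give right after it is the correct one.
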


\begin{proof}
Similar to \eqref{count1}, \eqref{count2}, \eqref{count3}, we have
\begin{align}
    \binom{b}{2} &= \sum_{2\leq i+j\leq k}\binom{i}{2}t_{i,j}\\
    \binom{r}{2} &= \sum_{2\leq i+j\leq k}\binom{j}{2}t_{i,j}\\
    br &= \sum_{2\leq i+j\leq k}ijt_{i,j}
\end{align}
Then (with $\lambda > 0$ to be fixed later)
\begin{align*}
&\binom{b}{2}+\binom{r}{2}-\lambda br\\
&= \sum_{2\leq i+j\leq k}\left(\binom{i}{2}+\binom{j}{2}-\lambda ij\right)t_{i,j}\\
&= \sum_{2\leq i\leq k}\binom{i}{2}t_{i,0} + \sum_{2\leq j\leq k}\binom{j}{2}t_{0,j} + \sum_{i+j\leq k; i,j\geq 1} \frac12\left(i^2-i+j^2-j-2\lambda i j\right)t_{i,j}.
\end{align*}
Note that $i^2-i+j^2-j-2\lambda ij$ is maximized under the constraints $i,j\geq 1$ and $i+j\leq k$ when $\{i,j\}=\{k-1,1\}$.
Thus
\begin{align*}
    & \binom{b}{2}+\binom{r}{2}-\lambda br\\
    & \leq \binom{k}{2}\left(\sum_{2\leq i\leq k}t_{i,0} + \sum_{2\leq j\leq k}t_{0,j}\right) + \frac12(k-1)(k-2-2\lambda)\sum_{i+j\leq k; i,j\geq 1}t_{i,j}\\
    &\leq \binom{k}{2}\left(\sum_{2\leq i\leq k}t_{i,0} + \sum_{2\leq j\leq k}t_{0,j}\right)
\end{align*}
if $k-2-2\lambda\leq 0$.
Thus, we choose $\lambda = (k-2)/2$, and we obtain that the number of monochromatic lines is 
\[\sum_{2\leq i\leq k}t_{i,0} + \sum_{2\leq j\leq k}t_{0,j} \geq \binom{k}{2}^{-1}\left(\binom{b}{2}+\binom{r}{2}-\frac{k-2}{2}br\right) = \frac{b^2-(k-2)br+r^2-n}{k(k-1)}.\]
\end{proof}

\begin{corollary}\label{cor:prep-k-line}
In a set of $n$ points in the plane with at most $k$ points on a line, if one of the two color classes have at most $cn$ points, where $0 < c < 2/(k+\sqrt{k^2-4k})$, then there are $\Omega_k(n^2)$ monochromatic lines.
\end{corollary}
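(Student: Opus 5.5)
The plan is to apply Proposition~\ref{prep-k-line} directly. We only need to show that its numerator $b^2-(k-2)br+r^2-n$ is at least $\delta n^2$ for some $\delta=\delta(k,c)>0$ once $n$ is large. By symmetry, assume the smaller class is the red one, so $r\le cn$ and $b=n-r$. Write $r=xn$ with $0\le x\le c$.

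The quadratic part of the numerator is then
\[
n^2 f(x), \qquad f(x)=(1-x)^2-(k-2)x(1-x)+x^2 = kx^2-kx+1 .
\]
The roots of $f$ are
\[
x_\pm=\frac{k\pm\sqrt{k^2-4k}}{2k}.
\]
Rationalizing the smaller root gives
\[
x_-=\frac{k^2-(k^2-4k)}{2k\,(k+\sqrt{k^2-4k})}=\frac{2}{k+\sqrt{k^2-4k}} .
\]
This is exactly the threshold in the statement. We need $k\ge4$ for the root to be real. For $k\le 3$, $f$ has no real roots and is positive on all of $[0,1]$; this case is also covered by Lemma~\ref{lemma:1} or by direct counting.

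Since $f$ is a convex parabola with $f(0)=1>0$, it is decreasing on $[0,x_-]$. So for $0\le x\le c<x_-$ we get $f(x)\ge f(c)>0$. Hence
\[
b^2-(k-2)br+r^2-n\ \ge\ f(c)\,n^2-n ,
\]
and this is at least $\tfrac12 f(c)n^2$ once $n\ge 2/f(c)$. Dividing by $k(k-1)$ gives at least $\frac{f(c)}{2k(k-1)}n^2=\Omega_k(n^2)$ monochromatic lines for large $n$. For bounded $n$ the claim is trivial, absorbing constants, since the Motzkin--Rabin theorem gives at least one monochromatic line when the set is non-collinear. The implied constant depends on $c$ as well as $k$, which is implicit in the statement.

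There is no real obstacle. The only points needing care are checking that the stated bound on $c$ is the smaller root of $f$ after rationalization, and noting that $f$ is positive to the left of that root.
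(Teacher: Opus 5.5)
Your proposal is correct and follows the same route as the paper: apply Proposition~\ref{prep-k-line}, rewrite $b^2-(k-2)br+r^2$ as $(kx^2-kx+1)n^2$ with $x$ the fraction of points in the small color class, and use that this quadratic is decreasing and positive to the left of its smaller root $2/(k+\sqrt{k^2-4k})$. Your version is slightly more careful than the paper's. The paper writes the quadratic evaluated directly at $c$, an identity that only holds when the small class has exactly $cn$ points, whereas you explicitly use monotonicity to get $f(x)\ge f(c)>0$ and absorb the $-n$ term for large $n$.
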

\begin{proof}
Denote the number of blue points by $b$, and the number of red points by $r$, and assume without loss of generality that $b\leq cn$.
Using $b+r=n$, we can write $b^2-(k-2)br+r^2 = (kc^2-kc+1)n^2$.
The quadratic polynomial $kc^2-kc+1$ is decreasing on the interval $0 < c < 2/(k+\sqrt{k^2-4k})$ with a root at $2/(k+\sqrt{k^2-4k})$.
Therefore, $C:=kc^2-kc+1 > 0$
By Proposition~\ref{prep-k-line}, the number of monochromatic lines is at least $\frac{Cn^2-n}{k(k-1)} = \Omega(n^2)$.
\end{proof}

Proposition~\ref{prep-k-line} established a quantitative lower bound and naturally leads the following structural question:

\begin{question}
What is the structure of a $2$-colored point set in which there are very few monochromatic lines?
\end{question}

It is easy to construct an example of a set with only one monochromatic line.
\begin{example}\label{ex:one-monochromatic-line}
Fix a line $\ell$ and choose any number $b$ of blue points not on $\ell$.
For each line through two blue points, color its intersection point with $\ell$ red.
Add an arbitrary number of red points on $\ell$.
Then there is no blue monochromatic line, and $\ell$ is a monochromatic line containing all the red points.
\end{example}
If we choose the blue points in the above example generically, we end up with a minimum of $\binom{b}{2}$ red points, and then the colors are very unbalanced.
At the other extreme, it is possible in the above example to choose the blue points such that we end up with the same number of red points as blue points.
For later reference, we describe this special case next.
\begin{example}[B\"or\"oczky \cite{crowe1968sylvester}]\label{ex:boroczky}
Consider the vertices of a regular $k$-gon on a circle.
The lines through pairs of vertices (edges and diagonals of the $k$-gon), as well as the lines tangent to the circle at a vertex, fall in $k$ parallel classes, corresponding to $k$ points at infinity.
This example has $n=2k$ points and there are exactly $k=n/2$ ordinary lines, namely the tangent lines at the $n$ vertices.
If we color the $k$ points on the circle blue and the $k$ points on the line red, then we obtain the same number of blue and red points, and the line at infinity is the only monochromatic line.
See the case $k=5$ in Figure~\ref{fig:boroczky}, which has been drawn in perspective such that the line at infinity is visible.
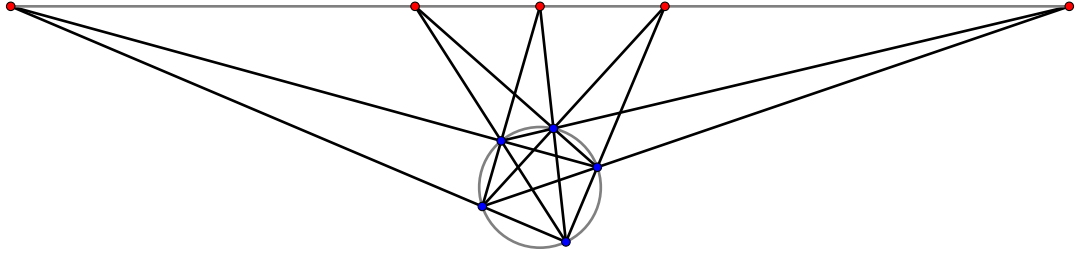
\begin{figure}
\centering
\begin{tikzpicture}[line cap=round,line join=round,scale=0.8]
\draw [color=gray,line width=1pt] (0,0) circle (1cm);
\draw [line width=1pt] (0.9428090415820634,0.3333333333333334)-- (-2.054972593492032,3);
\draw [line width=1pt] (0.42657694360473053,-0.9044512762912365)-- (2.05497259349203,3);
\draw [line width=1pt] (0.42657694360473053,-0.9044512762912365)-- (-8.705003597931286,3);
\draw [line width=1pt] (0.42657694360473053,-0.9044512762912365)-- (-2.054972593492032,3);
\draw [line width=1pt] (0.42657694360473053,-0.9044512762912365)-- (0,3);
\draw [line width=1pt] (0.9428090415820634,0.3333333333333334)-- (-8.705003597931286,3);
\draw [line width=1pt] (-0.9486077528733273,-0.31645431137624985)-- (8.705003597931283,3);
\draw [line width=1pt] (-0.9486077528733273,-0.31645431137624985)-- (0,3);
\draw [line width=1pt] (-0.9486077528733273,-0.31645431137624985)-- (2.05497259349203,3);
\draw [line width=1pt] (-0.637787785600776,0.7702121399578551)-- (8.705003597931283,3);
\draw [color=gray,line width=1pt] (-8.705003597931286,3)-- (8.705003597931283,3);
\draw [fill=red] (-8.705003597931286,3) circle (2pt);
\draw [fill=red] (0,3) circle (2pt);
\draw [fill=red] (8.705003597931283,3) circle (2pt);
\draw [fill=red] (-2.054972593492032,3) circle (2pt);
\draw [fill=red] (2.05497259349203,3) circle (2pt);
\draw [fill=blue] (0.22121476756733635,0.9752251158630654) circle (2pt);
\draw [fill=blue] (0.9428090415820634,0.3333333333333334) circle (2pt);
\draw [fill=blue] (0.42657694360473053,-0.9044512762912365) circle (2pt);
\draw [fill=blue] (-0.9486077528733273,-0.31645431137624985) circle (2pt);
\draw [fill=blue] (-0.637787785600776,0.7702121399578551) circle (2pt);
\end{tikzpicture}
\caption{B\"or\"oczky example: Five blue points on a circle and five red points on a line. There is just one monochromatic line.}\label{fig:boroczky}
\end{figure}
\end{example}

We conjecture that the B\"or\"oczky example is extreme, in the sense that if we assume that fewer than half of the points are on a line, then there will be many monochromatic lines.
\begin{conjecture}
For any $\delta > 0$ there exists $\epsilon > 0$ such that in a non-collinear set of $n$ two-colored points, if the number of points on a line is at most $(1/2-\delta)n$, then the number of monochromatic lines is at least $\epsilon n^2$.  
\end{conjecture}

Our next result shows that Example~\ref{ex:one-monochromatic-line} describes all possible two-colored sets with only one monochromatic line.
It implies a slight strengthening of the Motzkin–Rabin theorem, namely that if both color classes are non-collinear, then more than one monochromatic line must exist.

\begin{theorem}\label{thm:2.11}
Let $S$ be a finite set of points, each colored blue or red.
Suppose that there is just one monochromatic line $\ell$.
Then the two color classes are $S\cap\ell$ and $S\setminus\ell$.
\end{theorem}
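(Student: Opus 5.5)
We argue by contradiction, working on both the primal and the dual side. Without loss of generality the points of $S\cap\ell$ are all red. It suffices to show that every point of $S\setminus\ell$ is blue. Then every blue point is off $\ell$ and every red point is on $\ell$, so the color classes are $S\setminus\ell$ and $S\cap\ell$.

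Suppose instead that the set $R'$ of red points off $\ell$ is nonempty, and let $B$ be the blue set. Since $\ell$ is the only monochromatic line, every line other than $\ell$ spanned by $S$ is bichromatic. Concretely:
\begin{itemize}
\item every line through two points of $R'\cup(S\cap\ell)$ other than $\ell$ contains a blue point;
\item every line through two blue points contains a red point, which may lie on $\ell$ or in $R'$.
\end{itemize}
As a first step I apply a projective transformation sending $\ell$ to the line at infinity. Then the points of $S\cap\ell$ become red directions, and every line through two points of $R'$ must contain a blue point. I also dispose of degenerate cases directly. If $B$ is empty, all points are red and every line is monochromatic, so there are several monochromatic lines unless $S$ is collinear. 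If $R'\cup B$ is collinear on a line $m$, then either $m$ or a line joining an off-$m$ point to a point of $m$ gives a second monochromatic line, by a short case check.

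The main step uses the counting proof of Motzkin--Rabin due to Chakerian (Cauchy's lemma via Euler's formula) rather than the theorem as a black box. Dualize, so that $S$ becomes an arrangement of red and blue lines in the projective plane. The monochromatic lines of $S$ become vertices where all incident lines have one color; call these monochromatic vertices. Cauchy's lemma for a planar graph with two-colored edges bounds the total number of color changes around vertices, and it forces at least three ``good'' vertices. In our setting a good vertex is either monochromatic or a vertex with exactly two color changes in the cyclic order of its incident lines. The plan is to track this inequality carefully.

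The single monochromatic vertex $\ell^*$ has all incident lines red (the duals of $S\cap\ell$). The hypothesis says every other vertex is bichromatic, hence has at least two color changes. I would show that in the dual of the point at infinity picture, the lines dual to $R'$ produce additional faces or vertices whose color-change count exceeds what Euler's formula allows. Roughly, the deficit in Cauchy's inequality that is normally absorbed by three good vertices can be absorbed by $\ell^*$ alone only when no red line misses $\ell^*$, that is, only when $R'=\emptyset$.

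I expect this equality analysis in the Euler count to be the main obstacle. If it proves too delicate, I would fall back on Motzkin's minimal-area (or minimal-distance) argument, chosen relative to $\ell$. Pick a red point $p\in R'$ and a bichromatic configuration minimizing the distance from a point of $R'\cup B$ to a connecting line. Then I would show that the extremal line found by the Motzkin argument cannot pass through $S\cap\ell$, hence is a second monochromatic line, which is a contradiction.
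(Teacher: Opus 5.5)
There is a genuine gap: the step that carries all the content is announced but never argued. Your sentence ``the deficit \dots\ can be absorbed by $\ell^*$ alone only when no red line misses $\ell^*$'' is, after dualizing, just the theorem restated: $\ell^*$ is the only monochromatic vertex $\Rightarrow$ every red line passes through $\ell^*$, i.e.\ $R'=\emptyset$. Your fallback via Motzkin's extremal argument does not say what is minimized, nor why the extremal line must avoid $S\cap\ell$.

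The count itself is also misdescribed. In the dual arrangement the half-edges around a vertex carry the colors of its lines repeated twice. So, counted over half-edges (which is what Euler's formula sees), a bichromatic vertex has at least $4$ color changes, with exactly $4$ iff its lines form two color blocks. With $V-E+F=1$ in the projective plane and at most $2\lfloor k/2\rfloor$ changes around a $k$-gon, one gets $\sum_v(c_v-4)\le -4$. This guarantees one monochromatic vertex, not ``three good vertices'': two-block vertices contribute nothing, and the near-pencil and B\"or\"oczky examples really have only one monochromatic line. If $\ell^*$ is the only monochromatic vertex, equality does give structure. Every other vertex is two-block, every face is a triangle with exactly two changes or an alternating quadrilateral, and hence every face at $\ell^*$ is a triangle with a blue third side. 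But these are local conditions, and you give no argument that they are incompatible with a red line $p^*$, $p\in R'$, missing $\ell^*$. That global step is the whole theorem.

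The paper supplies it by a descent in the dual, which in your colors reads as follows. Since $|S\cap\ell|\ge 2$, $p^*$ meets two red lines through $\ell^*$ in distinct points $A,B\ne\ell^*$. These are red--red crossings, so there are blue lines $\beta_1\ni A$ and $\beta_2\ni B$, with $\beta_1\neq\beta_2$ since $AB=p^*$ is red. The lines $p^*,\beta_1,\beta_2$ cut the plane into four triangles on $A,B,C$, where $C=\beta_1\cap\beta_2$, and exactly one of them contains $\ell^*$.

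A red line through the blue--blue point $C$ passes through two opposite triangles at $C$. So it splits one, $T$, whose closure misses $\ell^*$, and it meets side $AB$ of $T$ at an interior point $D$. This $D$ is a red--red crossing other than $\ell^*$, so a blue line through $D$ enters $T$, splits one of the two subtriangles, and ends inside a side lying on a blue line. The process then repeats with the colors alternating. Every forced point lies in a closed triangle missing $\ell^*$, so it is bichromatic and the process never stops. This yields an infinite strictly nested chain of triangles cut out by finitely many lines, a contradiction.

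Your minimal-area fallback could be made into exactly this argument. You would need to fix the extremal object: a triangle of the arrangement whose closure misses $\ell^*$, with two sides of one color and a line of the other color through their common vertex entering it. You would then need to check that the forced line always enters the triangle. As written, neither of your routes is a proof.
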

\begin{proof}
We prove the projective dual statement, namely that in a finite set of lines in the plane, each colored blue or red, if there is just one monochromatic intersection point, of blue lines, say, then all blue lines intersect in that point.

Let $P$ be the blue intersection point, and suppose that not all the blue lines pass through $P$.
After a projective transformation, $P$ is on the line at infinity and the line at infinity is not one of the given lines.
Then all lines through $P$ are parallel, and again without loss of generality, we can assume that all blue lines through $P$ are vertical lines.
Then our assumption becomes that not all blue lines are vertical.
Let $b_1$ be a non-vertical blue line (Figure~\ref{fig:MR}).
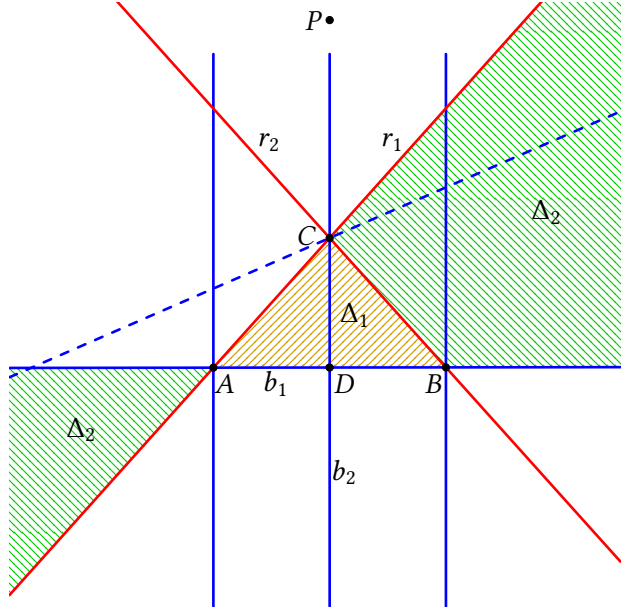
\begin{figure}
\centering
\definecolor{qqzzqq}{rgb}{0,0.8,0}
\definecolor{zzwwqq}{rgb}{0.8,0.6,0}
\begin{tikzpicture}[line cap=round,line join=round,scale=0.9]
\clip(-3.7506204491979935,-2.638050954481335) rectangle (5.181992622791327,6.258213742850853);
\fill[line width=0pt,color=zzwwqq,fill=zzwwqq,pattern=north east lines,pattern color=zzwwqq] (0.93,2.79) -- (-0.77,0.8793002422290823) -- (2.63,0.8836003812859525) -- cycle;
\fill[line width=0pt,color=qqzzqq,fill=qqzzqq,pattern=north west lines,pattern color=qqzzqq] (5.18130671011677,7.568218059547052) -- (0.93,2.79) -- (2.63,0.8836003812859525) -- (7.69,0.89) -- cycle;
\fill[line width=0pt,color=qqzzqq,fill=qqzzqq,pattern=north west lines,pattern color=qqzzqq] (-6.620234094422936,0.8719011774875347) -- (-6.223076923076924,-5.249636673082247) -- (-0.77,0.8793002422290823) -- cycle;
\draw [line width=1pt,color=blue] (-0.77,-2.638050954481335) -- (-0.77,5.5);
\draw [line width=1pt,color=blue] (0.93,-2.638050954481335) -- (0.93,5.5);
\draw [line width=1pt,color=blue] (2.63,-2.638050954481335) -- (2.63,5.5);
\draw [line width=1pt,color=blue,domain=-3.7506204491979935:5.181992622791327] plot(\x,{(--9.794111830156845--0.014071834503441738*\x)/11.126207008413832});
\draw [line width=1pt,color=red,domain=-3.7506204491979935:5.181992622791327] plot(\x,{(--2.9660492252730464--1.9106997577709177*\x)/1.7});
\draw [line width=1pt,color=red,domain=-3.7506204491979935:5.181992622791327] plot(\x,{(-6.515951645404065--1.9063996187140475*\x)/-1.7});
\draw [line width=1pt,dash pattern=on 3pt off 4pt,color=blue,domain=-3.7506204491979935:5.181992622791327] plot(\x,{(--10.401630747386394--1.9140718345034418*\x)/4.3662070084138325});
\draw[color=black] (0.15,0.65) node {$b_1$};
\draw[color=black] (1.15,-0.65) node {$b_2$};
\draw [fill=black] (-0.77,0.8793002422290823) circle (1.5pt);
\draw[color=black] (-0.6,0.65) node {$A$};
\draw [fill=black] (0.93,2.79) circle (1.5pt);
\draw[color=black] (0.6,2.83) node {$C$};
\draw[color=black] (1.85,4.2) node {$r_1$};
\draw [fill=black] (2.63,0.8836003812859525) circle (1.5pt);
\draw[color=black] (2.45,0.65) node {$B$};
\draw[color=black] (0.05,4.2) node {$r_2$};
\draw[color=black] (1.3,1.6556008018919623) node {$\Delta_1$};
\draw[color=black] (4.1,3.173145443827075) node {$\Delta_2$};
\draw[color=black] (-2.7,-0.0341789227355431) node {$\Delta_2$};
\draw [fill=black] (0.93,0.8814503117575174) circle (1.5pt);
\draw[color=black] (1.15,0.65) node {$D$};
\draw [fill=black] (0.93,6) circle (1.5pt);
\draw [color=black] (0.7,6) node {$P$};
\end{tikzpicture}
\caption{Illustration of Theorem~\ref{thm:2.11}}\label{fig:MR}
\end{figure}
Then $b_1$ intersects the vertical blue lines (of which there are at least two) in points that are not at infinity.
Through each of these intersection points there has to pass a red line.
Let $r_1$ and $r_2$ be two such red lines, intersecting $b_1$ in $A$ and $B$, respectively.
They are not vertical and not parallel to $b_1$, so they, together with $b_1$ form four triangles in the projective plane (each with vertices $A,B,C$), with only one of the triangles containing $P$.
(In Figure~\ref{fig:MR}, two of the triangles not containing $P$, $\Delta_1$ and $\Delta_2$, are indicated.)
There has to be a blue line $b_2$ through the intersection $C$ of $r_1$ and $r_2$, and $b_2$ will cut at least one of the triangles bounded by $r_1,r_2,b_1$ that does not contain $P$ into two smaller triangles.
In Figure~\ref{fig:MR}, $b_2$ cuts triangle $\Delta_1$ into triangles $ADC$ and $BDC$.
(The dashed blue line in the figure shows the case where $b_2$ cuts $\Delta_2$.)
Then there has to be a red line $r_3$ that passes through the intersection point $D$ of $b_1$ and $b_2$ that will cut either triangle $ADC$ or $BDC$.
Since this process can be repeated indefinitely, we obtain a contradiction.

Thus, our original assumption that not all the blue lines pass through $P$ must be false.
\end{proof}

The B\"or\"oczky example (Example~\ref{ex:boroczky}) lies on the union of a conic and a line, which is a (reducible) cubic curve.
Since there are no blue monochromatic lines, the line through any two blue points passes through a red point.
Example~\ref{ex:cubic} gives another such an example, with the points all lying on an irreducible cubic curve.
Mili\'cevi\'c \cite{milicevic2018} conjectured that all such constructions must lie on a cubic curve.
\begin{conjecture}[Mili\'cevi\'c \cite{milicevic2018}]
Let $n$ blue points, no three on a line, and $n$ red points, disjoint from the blue points, be given.
If the line through any two blue points contains a red point, then all $2n$ points lie on a cubic curve.
\end{conjecture}

We show that this conjecture holds for $n=5$.

\begin{theorem}\label{thm:Milicevic}
Given a set of $5$ blue points, no three on a line, and a set of $5$ red points, disjoint from the blue points, such that the line through any two blue points contains a red point, then it follows that all $10$ points lie on a cubic curve.
\end{theorem}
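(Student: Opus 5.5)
The plan is to first pin down the incidence structure combinatorially, then show that the configuration is determined by the blue points, and finally verify the cubic condition algebraically. Let the blue points be $b_1,\dots,b_5$. Since no three are collinear, the $10$ lines $b_ib_j$ are distinct. A red point $r$ not in the blue set can lie on at most two of these lines: the lines through $r$ spanned by blue points contain pairwise disjoint pairs of blue points, and $\lfloor 5/2\rfloor=2$. Each of the $10$ lines contains at least one red point, and there are $5$ red points, each on at most $2$ lines. The incidence count therefore forces every red point to lie on exactly two blue lines, and every blue line to contain exactly one red point.

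It follows that each red point is $r=b_ib_j\cap b_kb_l$ with $\{i,j\}\cap\{k,l\}=\emptyset$. So the $10$ edges of $K_5$ are partitioned into $5$ pairs of disjoint edges, each pair omitting one vertex. I would classify these partitions up to the action of $S_5$; there are only $15$ such disjoint-edge pairs, so this is a small finite check. One natural type is the cyclic one, where $\{i,i+1\}$ is paired with $\{i+2,i+4\}$ and the omitted vertices are all different; this is the type of the B\"or\"oczky example of Example~\ref{ex:boroczky}. I expect a second type in which some vertex is omitted twice, and I would check whether it is realizable with the red points distinct and off the blue set.

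The key observation is that, for a fixed partition type, the red points are completely determined by the blue points as intersections of the paired lines. Hence the configuration has no further freedom. After a projective transformation I would normalize $b_1,\dots,b_4$ to $(1:0:0),(0:1:0),(0:0:1),(1:1:1)$ and leave $b_5=(x:y:z)$ free. The red points then have explicit coordinates polynomial in $x,y,z$. Ten points lie on a common cubic if and only if the $10\times 10$ matrix of cubic monomials evaluated at them is singular. For each partition type, the claim therefore reduces to the polynomial identity $\det M(x,y,z)\equiv 0$.

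The main obstacle is this determinant verification. The direct computation is heavy, and I would try to replace it with a conceptual argument before resorting to computer algebra. One tempting route is Cayley--Bacharach with two triples of lines chosen so that their $9$ intersection points are $8$ points known to lie on the cubic through the other $9$ points, plus the last red point. However, the counting above shows that two triples of blue lines would need $6$ blue points, so pure line pencils fail. I would instead try the unique conic $Q$ through the $5$ blue points, together with suitable lines, allowing tangency or multiplicity where necessary. Failing that, I would do the symbolic determinant, or check it on sufficiently many rational points $(x,y,z)$ and bound its degree to conclude that it vanishes identically.
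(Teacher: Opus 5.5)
Your incidence count is correct, and slightly cleaner than the paper's bipartite-matching argument. The ``second type'' you anticipate cannot occur, though. The four lines $bb'$ through a blue point $b$ each carry exactly one red point, and these red points are distinct, so exactly one red point avoids $b$. Combinatorially: the four edges of $K_5$ at a vertex must go to four different pairs. Hence the pairing is forced to be the pentagon/pentagram one, $M_{r_i}=\{b_ib_{i+1},b_{i+2}b_{i+4}\}$ (indices mod $5$), which is what the paper uses.

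The genuine gap is the final step. Reducing the theorem to $\det M(x,y,z)\equiv 0$ is legitimate, but that identity is the entire content of the theorem, and you give no argument for it, only a list of things to try. A brute-force check would be a valid computer-assisted proof. Only $b_5$ and the four red points not avoiding $b_5$ depend on $(x,y,z)$, each linearly, so $\det M$ is a form of degree $15$, and exact evaluation on a $16\times16$ grid in the chart $z=1$ would settle it. As written, however, nothing is verified.

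The idea you are missing is the group law on a cubic through nine of the points, which packages exactly the Cayley--Bacharach reasoning you were after.
\begin{itemize}
\item Let $\gamma$ be any cubic through $b_1,\dots,b_5,r_1,\dots,r_4$, and suppose first that $\gamma$ is irreducible. Each of these nine points lies on a line with two others of them, so none is singular.
\item For $i=1,\dots,4$, the incidence $r_i\in b_ib_{i+1}\cap b_{i+2}b_{i+4}$ gives $b_i\oplus b_{i+1}=b_{i+2}\oplus b_{i+4}$.
\item The five expressions $b_i\oplus b_{i+1}\ominus b_{i+2}\ominus b_{i+4}$, $i=1,\dots,5$, sum to $o$ identically, since each $b_j$ occurs twice with each sign. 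So the relation for $i=5$ follows from the other four.
\item Hence the third intersection points of $\gamma$ with $b_5b_1$ and with $b_2b_4$ coincide. That common point lies on both lines, so it is $b_5b_1\cap b_2b_4=r_5$.
\end{itemize}

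Reducible $\gamma$ still needs separate treatment.
\begin{itemize}
\item If $\gamma=C\cup\ell$ with $C$ an irreducible conic, a count shows that all blue points lie on $C$ and $r_1,\dots,r_4$ lie on $\ell$. The same relations then hold in the group of $C\cup\ell$, and the argument goes through.
\item If $\gamma$ is a union of lines, each line holds at most two blue points, so three distinct lines each contain a blue point. No line contains a blue point and two red points, so $\gamma$ contains at most three red points. This contradicts $r_1,\dots,r_4\in\gamma$.
\end{itemize}
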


\begin{proof}
Denote the set of blue points by $B$ and the set of red points by $R$.
Exactly as in the proof of Lemma~\ref{lemma:3}, if we define a bipartite graph $G$ with parts $B$ and $R$ by connecting $b\in B$ and $r\in R$ if the line through $b$ and $r$ does not contain any other blue points, then $G$ is a perfect matching.
Thus, for each red point $r$ there are exactly three lines through $r$ and a blue point, with two of them containing $2$ blue points, and one containing only one blue point, which is the neighbour of $r$ in $G$.
Thus, each red point $r$ determines a matching $M_r$ of two edges on the set of blue points.
Since there is a red point on the line through any two blue points, it follows that none of the matchings $M_r$ have an edge in common.
In addition, no two matchings have the same set of $4$ vertices as endpoints.
Then it can be checked that there is just one possibility for the matchings, namely we can label the blue points as $b_i$ and the red points as $r_i$, $i=1,\dots,5$, such that $M_{r_i}=\{b_i b_{i+1}, b_{i+2}b_{i+4}\}$, where indices are modulo $5$.
Next, let $\gamma$ be a cubic that passes through the $9$ points $b_1,\dots,b_5,r_1,\dots,r_4$.
We will show that $r_5\in\gamma$.

First, suppose that $\gamma$ is irreducible.
We will use the group $(\gamma^*,\oplus)$ on the set of its smooth points.
Since each of the points $b_1,\dots,b_5,r_1,\dots,r_4\in \gamma$ lies on a line through two others in this set, none of these points can be the singular point (if any) of $\gamma$.
From the collinearities, we obtain the following relations in the group:
$b_2\oplus b_5\oplus r_1=o=b_3\oplus b_4\oplus r_1$, hence
$b_2\oplus b_5=b_3\oplus b_4$, and similarly,
$b_3\oplus b_1=b_4\oplus b_5$,
$b_4\oplus b_2=b_5\oplus b_1$,
$b_5\oplus b_3=b_1\oplus b_2$.
Let $r_5'=\ominus b_1\ominus b_4$ and
$r_5''=\ominus b_2\ominus b_3$.
Thus, $r_5'$ is the third point of intersection of the line $b_1b_4$ with $\gamma$, and
$r_5''$ is the third point of intersection of the line $b_2b_3$ with $\gamma$.
From the above, we have $b_1\ominus b_2 = b_4\ominus b_5 = b_2\ominus b_3 = b_5\ominus b_1 = b_3\ominus b_4$, hence $r_5'=r_5''$.
Therefore, the line $b_2b_3$ and $b_1b_4$ intersect in a point on $\gamma$, hence $r_5\in\gamma$.

Next, consider the case where $\gamma$ is the union of a conic $C$ and a line $\ell$.
Suppose that there is a blue point on $\ell$.
Since no line contains a blue point and more than one red point, there is at most one red point on $\ell$, hence at least $3$ red points on $C$.
There are also at most $2$ blue points on $\ell$, hence at least $3$ blue points on $C$.
The three lines through these blue points have to contain distinct red points, but they all have to lie on $\ell$, a contradiction.

Thus, all $5$ blue points are on $C$.
It follows that no red point is on $C$, hence $r_1,\dots,r_4\in\ell$.
We now use the natural group associated to this cubic.
Then there are bijections $f: C\cap\gamma^*\to G$ and $g:\ell\cap\gamma^*\to G$ from the conic and the line to some abelian group $(G,\oplus)$ such that non-singular points $a,b\in C$ and $c\in\ell$ are collinear iff $f(a)\oplus f(b)\oplus g(c) = 0$.
(We have $G$ is isomorphic to the circle group if $\ell$ is disjoint from $C$, isomorphic to $(\mathbb{R},+)$ is $\ell$ is tangent to $C$, and isomorphic to $(\mathbb{R}^*,\cdot)$ if $\ell$ intersects $C$ in two points.)
As before, none of the $9$ points $b_1,\dots,b_5,r_1,\dots,r_4$ are singular, and we obtain $f(b_2)\oplus f(b_5)=f(b_3)\oplus f(b_4)$, 
$f(b_3)\oplus f(b_1)=f(b_4)\oplus f(b_5)$,
$f(b_4)\oplus f(b_2)=f(b_5)\oplus f(b_1)$,
$f(b_5)\oplus f(b_3)=f(b_1)\oplus f(b_2)$,
and as before, we can then show that $r_5=g^{-1}(\ominus f(b_1)\ominus f(b_4)) = g^{-1}(\ominus f(b_2)\ominus f(b_3))$.

Finally, consider the case where $\gamma$ is a union of at most three lines.
Since there are at most two blue points on a line, we must have that $\gamma$ is a union of three distinct lines $\ell_1,\ell_2,\ell_3$.
Then two of the lines, say $\ell_1$ and $\ell_2$, each contains at two blue points, and $\ell_3$ contains one blue point.
Both $\ell_1$ and $\ell_2$ contain at most one red point, so $\ell_3$ has to contain at least two red points, a contradiction.
%\qed
\end{proof}

Years before Mili\'cevi\'c formulated his conjecture, Jamison \cite{jamison1986few} already proved that if the red points lie on a line, then the blue points have to lie on a conic, which affirms a special case of the conjecture.
To the best of our knowledge, the converse has not been considered before.
This is our next theorem.

\begin{theorem}\label{thm:conic-line}
Let $n\geq 6$ blue points on a conic, and $n$ red points, disjoint from the blue points, be given.
If the line through any two blue points contains a red point, then the red points lie on a line.
\end{theorem}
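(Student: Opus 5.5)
We identify the conic $C$ containing the blue set $B$ with the projective line, and we encode each red point by a projective involution. A projective transformation lets us take $C$ nondegenerate, and $C$ is then $\mathbb{P}^1$ via a parametrization. Since $C$ is nondegenerate, no three blue points are collinear, and no red point lies on $C$. Otherwise a red point $r\in C$ would be on no secant $b b'$ with $b,b'\ne r$, and the counting below would fail. (This must be checked, but it follows from the same counting.)

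For a point $p\notin C$, the map $\sigma_p$ sending $x\in C$ to the second intersection of the line $px$ with $C$ is an involution in $\mathrm{PGL}_2$. Conversely, every involution of $C$ arises in this way from a unique centre $p$. For a red point $r$, the blue secants through $r$ form a matching $M_r$ on $B$, and $\sigma_r$ restricts to this matching. Let $d_r=|M_r|\le\lfloor n/2\rfloor$. Every secant contains a red point, so $\sum_r d_r\ge\binom n2$.

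\textbf{Step 1 (counting).} The first step is to extract the rigid structure from this count, exactly as in the proof of Lemma~\ref{lemma:3}.

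For $n$ odd, the count forces every $d_r=(n-1)/2$. Then each secant carries exactly one red point, and each $\sigma_r$ preserves $B$, fixing exactly one blue point.

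For $n$ even, there is slack of $n/2$. I would combine the count with the bipartite graph argument of Lemma~\ref{lemma:3} (blue $b$ adjacent to red $r$ if $br$ contains no other blue point) to show that every red point is a centre for a large matching. The outcome should be that, for all but a bounded number of red points, $\sigma_r$ maps $B$ to itself.

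\textbf{Step 2 (the group generated).} The second step is to study the group $G\le\mathrm{PGL}_2(\mathbb{R})$ generated by the involutions $\sigma_r$ that preserve $B$. A projectivity of $\mathbb{P}^1$ is determined by its action on three points, and $n\ge 6>3$, so $G$ acts faithfully on the finite set $B$. Hence $G$ is finite, and we may conjugate it into $\mathrm{PO}(2)$. Its real finite subgroups are only cyclic or dihedral; over $\mathbb{C}$ one would also have to exclude $A_4$, $S_4$ and $A_5$, but realness settles this.

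$G$ contains many distinct involutions (distinct red points give distinct centres), so $G$ is dihedral $D_m$, with rotation subgroup fixing an interior point $c$ of $C$. The reflections of $D_m$ are exactly the involutions $\sigma_p$ with $p$ on the polar line of $c$, which is the line at infinity when $C$ is a circle centred at $c$. Hence every red point that induces an element of $G$ lies on this line $\ell$. Moreover $B$ is a union of $G$-orbits on which reflections act with the prescribed fixed-point pattern; for $n$ odd this forces $B$ to be a regular $n$-gon, giving exactly Example~\ref{ex:boroczky}.

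\textbf{Step 3 (the exceptional red points).} The last step handles the even case: red points whose matching $M_r$ is not the whole of $\sigma_r|_B$, or which lie on few secants, possibly none. Here I would use the rigid structure just found. Once $B$ is known to be a regular $n$-gon, or a union of two dihedral orbits, up to projectivity, its secants group into $n$ parallel classes. Their common points on $\ell$ already number $n$ and cover all secants. A counting argument then forces any additional red point to be one of these, contradicting disjointness of the $n$ red points. So all red points lie on $\ell$.

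I expect this even case (Steps 1 and 3) to be the main obstacle. The slack of $n/2$ allows a red point to cover only part of $\sigma_r|_B$, or none of it, and ruling out a red point off $\ell$ needs care. I would first try to show that any red point with $d_r<n/2$ forces another red point to have $d_r>n/2$, which is impossible. The hypothesis $n\ge 6$ should enter precisely there, and in excluding small coincidental configurations like the $n=5$ picture of Theorem~\ref{thm:Milicevic}.
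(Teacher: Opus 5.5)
\textbf{The gap is in Step~1, already for odd $n$.} Counting gives $d_r=(n-1)/2$ for every red point and exactly one red point on each secant. It does \emph{not} give $\sigma_r(B)=B$. The matching $M_r$ covers $B\setminus\{u\}$ for a single blue point $u$, and all you can conclude is $\sigma_r(u)\notin B\setminus\{u\}$. Nothing forces $\sigma_r(u)=u$, that is, nothing forces the line $ru$ to be tangent to $C$.

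Counting cannot repair this. Your odd-case argument never really uses $n\ge 6$: faithfulness of $G$ on $B$ needs only $|B|\ge 3$. So it would apply verbatim to $n=5$ and prove the red points collinear, contradicting Figure~\ref{fig:10points}. There $r_1=b_2b_4\cap b_3b_5$ lies inside the circle, so $\sigma_{r_1}$ has no fixed point at all and sends $b_1$ to its antipode, which is not blue. Without $\sigma_r(B)=B$, Step~2 has no input, since finiteness of $G$ rested entirely on it.

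\textbf{What is missing is a geometric use of $n\ge 6$.} In the odd case it can be supplied as follows.
\begin{itemize}
\item As in Lemma~\ref{lemma:3}, suppose a red point on $b_{i-1}b_{i+1}$ lies inside the circle. It induces an orientation-preserving involution sending the $n-3$ blue points of the long arc into the short arc. Hence at least $n-4\ge 2$ lines through it carry a single blue point, whereas for odd $n$ exactly one does.
\item So every red point $r_i$ is outside, each $\sigma_{r_i}$ is orientation-reversing, and $M_{r_i}=\{b_{i-k}b_{i+k}\}$.
\item Only now can tangency be proved. The products $\sigma_{r_{i+1}}\sigma_{r_i}$ and $\sigma_{r_{i+2}}\sigma_{r_{i+1}}$ both act as $b_j\mapsto b_{j+2}$ on the $n-3\ge 3$ points with $j\notin\{i-1,i,i+1\}$. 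Hence all these products are one projectivity $\tau$, and $\sigma_{r_i}(b_i)=\tau\sigma_{r_{i-1}}(b_i)=\tau(b_{i-2})=b_i$.
\end{itemize}
With this, your group-theoretic Step~2 does go through for odd $n$. It would then give a self-contained alternative to the paper's route, which combines Lemma~\ref{lemma:3} with Theorem~\ref{thm:KP}.

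\textbf{The even case is only a plan, and it meets the same tangency obstruction.} The claim that $\sigma_r$ maps $B$ to itself for all but boundedly many $r$ is asserted, not derived. Step~3 presupposes that $B$ is already known to be projectively a regular polygon.

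\textbf{Three smaller points also need fixing.}
\begin{itemize}
\item In Step~2, if $m$ is even then $D_m$ contains the half-turn $\sigma_c$, whose centre $c$ is interior and not on the polar line. So ``every red point inducing an element of $G$ lies on $\ell$'' does not follow.
\item For even $n$, counting does not exclude a red point on $C$. One red point on $C$ plus $n-1$ red points each on $n/2$ secants gives exactly $(n-1)n/2=\binom{n}{2}$.
\item A projective map cannot make a degenerate conic nondegenerate. You should simply assume $C$ is irreducible.
\end{itemize}
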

This theorem cannot be extended to the case $n=5$: 
For a counterexample, see Figure~\ref{fig:10points}.
This example is related to the B\"or\"oczky example of $10$ points shown in Figure~\ref{fig:boroczky}.
It consists of the vertices $b_1,b_2,b_3,b_4,b_5$ of a regular pentagon inscribed in a circle, together with the point $r_1$ where the diagonals $b_2b_4$ and $b_3b_5$ intersect, the point $r_3$ where the diagonals $b_2b_5$ and $b_1b_4$ intersect, the point $r_4$ where the edges $b_1b_5$ and $b_2b_3$ intersect, the point $r_5$ where the edges $b_1b_2$ and $b_3b_4$ intersect, and the point $r_2$ at infinity where the parallel lines $b_4b_5$, $r_1r_3$, $b_1b_3$, $r_4r_5$ intersect.
The depiction in Figure~\ref{fig:10points} is drawn in perspective such that $r_2$ does not lie at infinity.
Nevertheless, as we show in Theorem~\ref{thm:Milicevic}, these $10$ points lie on a cubic curve.
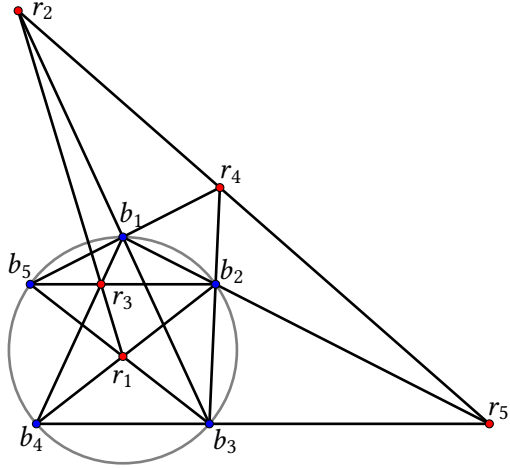
\begin{figure}
\centering
\begin{tikzpicture}[line cap=round,line join=round,scale=1.5]
\draw [line width=1pt,color=gray] (0,0) circle (1cm);
\draw [line width=1pt] (0,1)-- (-0.758795365752062,-0.6513290972413216);
\draw [line width=1pt] (-0.758795365752062,-0.6513290972413216)-- (0.812928447459954,0.5823635799999421);
\draw [line width=1pt] (0.812928447459954,0.5823635799999421)-- (-0.8129284474599541,0.5823635799999418);
\draw [line width=1pt] (-0.8129284474599541,0.5823635799999418)-- (0.7587953657520621,-0.6513290972413212);
\draw [line width=1pt] (-0.9190116821894457,3)-- (0,-0.05572809000084123);
\draw [line width=1pt] (-0.9190116821894457,3)-- (3.2143087503375507,-0.6513290972413204);
\draw [line width=1pt] (-0.8129284474599541,0.5823635799999418)-- (0.8504245592219606,1.4368997905084178);
\draw [line width=1pt] (0.7587953657520621,-0.6513290972413212)-- (0.8504245592219606,1.4368997905084178);
\draw [line width=1pt] (0,1)-- (3.2143087503375507,-0.6513290972413204);
\draw [line width=1pt] (-0.758795365752062,-0.6513290972413216)-- (3.2143087503375507,-0.6513290972413204);
\draw [line width=1pt] (0.7587953657520621,-0.6513290972413212)-- (-0.9190116821894457,3);
\draw [fill=blue] (0.7587953657520621,-0.6513290972413212) circle (1pt);
\draw[color=black] (0.9,-0.8) node {$b_{3}$};
\draw [fill=blue] (-0.758795365752062,-0.6513290972413216) circle (1pt);
\draw[color=black] (-0.8,-0.78) node {$b_{4}$};
\draw [fill=blue] (-0.8129284474599541,0.5823635799999418) circle (1pt);
\draw[color=black] (-0.9,0.75) node {$b_5$};
\draw [fill=blue] (0,1) circle (1pt);
\draw[color=black] (0.08,1.2) node {$b_1$};
\draw [fill=blue] (0.812928447459954,0.5823635799999421) circle (1pt);
\draw[color=black] (0.96,0.6997036359775891) node {$b_2$};
\draw [fill=red] (-0.9190116821894457,3) circle (1pt);
\draw[color=black] (-0.7,3) node {$r_2$};
\draw [fill=red] (-0.19190637444391553,0.5823635799999419) circle (1pt);
\draw[color=black] (0.0,0.44) node {$r_3$};
\draw [fill=red] (0,-0.05572809000084123) circle (1pt);
\draw[color=black] (0.0,-0.25) node {$r_1$};
\draw [fill=red] (0.8504245592219606,1.4368997905084178) circle (1pt);
\draw[color=black] (0.95,1.556317364578696) node {$r_4$};
\draw [fill=red] (3.2143087503375507,-0.6513290972413204) circle (1pt);
\draw[color=black] (3.3,-0.5349686655815484) node {$r_5$};
\end{tikzpicture}
\caption{Five blue points on a circle and five non-collinear red points such that the line through any two blue points contains a red point}\label{fig:10points}
\end{figure}

We show Theorem~\ref{thm:conic-line} by using a special case of Mili\'cevi\'c's conjecture proved by Keller and Pinchasi.
\begin{theorem}[Keller and Pinchasi  \cite{keller2020sets}]\label{thm:KP}
Let $n$ blue points, no three on a line, and $n$ red points, disjoint from the blue points, be given.
If the line through any two blue points contains a red point not between the two blue points, then all $2n$ points lie on a cubic curve.
\end{theorem}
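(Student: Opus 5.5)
The statement concerns $n$ blue points $B$, no three collinear, and $n$ red points $R$. Every line through two blue points carries a red point lying outside the segment between them. The plan has two parts: first pin down the combinatorial incidence structure, then use Cayley--Bacharach-type arguments to force a single cubic through all $2n$ points.

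\emph{Step 1: counting.} I would count as in the proof of Theorem~\ref{thm:Milicevic}. A red point $r$ lies on at most $\lfloor n/2\rfloor$ lines spanned by blue pairs, because no three blue points are collinear. The $\binom{n}{2}$ blue pairs must be covered by $n$ red points, so the inequality $n\lfloor n/2\rfloor\ge\binom n2$ is tight for odd $n$. In that case every red point lies on exactly $(n-1)/2$ blue lines and on one further line meeting a single blue point. No blue line carries two red points, and the bipartite graph $G$ (red to ``unpaired'' blue) is a perfect matching. For even $n$ there is slack of $n/2$, which I would absorb by a short extra case analysis.

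\emph{Step 2: convex position and a cyclic structure.} This is where the ``not between'' hypothesis enters. For each red point $r$, every blue pair on a line through $r$ lies on one ray from $r$. After a projective transformation sending a line that avoids $\operatorname{conv}(B)$ to infinity, I would aim to show three things. First, $B$ is in convex position. Second, every red point lies outside $\operatorname{conv}(B)$. Third, the pairs through a fixed red point are ``parallel chords'' of the convex polygon, that is, nested pairs $\{b_{i},b_{j}\}$ with $i+j$ constant modulo $n$ in the cyclic order.

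For convexity I would argue by contradiction with an extremal argument in the spirit of the proof of Theorem~\ref{thm:2.11}. Take a blue point $b$ inside the hull and a blue pair whose line has the red point on the ``wrong'' side; iterating produces an infinite descending chain of triangles. The outcome of this step should be a labelling $b_0,\dots,b_{n-1}$, $r_0,\dots,r_{n-1}$ such that $b_i,b_j,r_k$ are collinear exactly when $i+j\equiv k\pmod n$ (with $i\ne j$). This mirrors the incidences of a coset of $\mathbb Z_n$ on a cubic, as in Example~\ref{ex:cubic} and the B\"or\"oczky example. I expect this combinatorial rigidity to be the main obstacle: the counting alone allows many matchings, and the betweenness condition has to be used globally rather than line by line.

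\emph{Step 3: propagation along a cubic.} Given the structure from Step 2, choose a cubic $\gamma$ through nine suitable points, say $b_0,\dots,b_5$ and three red points determined by them. I would then show inductively that every further blue or red point lies on $\gamma$. The tool is Cayley--Bacharach: through a red point $r_k$ pass lines $b_ib_j$ and $b_{i'}b_{j'}$, and products of three such lines give two cubics meeting in nine points. If eight of these lie on $\gamma$, so does the ninth. Equivalently, on an irreducible $\gamma$ one runs the group-law computation of Theorem~\ref{thm:Milicevic}: the relations $b_i\oplus b_j\oplus r_{i+j}=o$ give $b_i\ominus b_{i+1}$ constant. Hence the blue points form an arithmetic progression in $(\gamma^*,\oplus)$, and each $r_k$ is forced to equal $\ominus b_i\ominus b_{k-i}\in\gamma$.

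The reducible cases (a conic plus a line, or three lines) would be handled as at the end of the proof of Theorem~\ref{thm:Milicevic}. Three lines are impossible because at most two blue points lie on a line. In the conic-plus-line case, all blue points must lie on the conic and all red points on the line, which is again a cubic. This completes the plan.
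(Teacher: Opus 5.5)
The paper does not prove this statement. It is quoted from Keller and Pinchasi and used only as a black box, through Lemma~\ref{lemma:3}, to obtain Theorem~\ref{thm:conic-line}. Your plan therefore has to stand on its own, and it has a genuine gap exactly where you anticipated one. Step~2 is essentially the whole theorem: the betweenness hypothesis must be turned into convex position of $B$, and you give no argument for this. The sketch you offer cannot work as stated, for two reasons.

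First, nothing in it uses $|R|=n$ or the matching from Step~1, yet without that the claim is false. Take any non-convex $B$ in general position and put one red point far out on each of the $\binom{n}{2}$ blue lines; the hypothesis holds and $B$ is not convex. Second, the descent of Theorem~\ref{thm:2.11} does not transfer. In dual form that proof alternates between two constraints: every intersection of two blue lines other than $P$ lies on a red line, and every intersection of two red lines lies on a blue line. The second constraint is what produces $b_2$ through $C$. Here nothing at all is assumed about lines through two red points, so the chain of triangles stops after one cut. You need a genuinely global argument combining the count with betweenness; without it, Steps~2 and~3 rest on an unproved lemma.

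Two further points. Even granting convexity, the fact that the chords through a red point $r$ are nested does not give a constant index sum. For odd $n$, the unique singleton line through $r$ may lie strictly between the two supporting lines of $\operatorname{conv}(B)$ through $r$, and then the sum jumps by one at that line. A double count fixes this, and also settles your even case:
\begin{enumerate}
\item A red point in $\operatorname{conv}(B)$ lies between the blue points on every blue line through it, so only exterior red points count.
\item An exterior red point has exactly two supporting lines, each carrying either a single blue point or a hull edge, and a hull edge can lie on a line through $r$ only if that line is one of them.
\item Compare the $n$ hull edges with the number of singleton lines allowed by Step~1, where each surplus red point on a blue line costs two singletons.
\end{enumerate}
This forces all red points to be exterior, exactly one red point on each blue line, and every singleton on a supporting line. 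That is precisely the class structure $i+j\equiv k \pmod n$.

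In Step~3, the group-law shortcut is circular. The relation $b_i\oplus b_j\oplus r_{i+j}=o$ is available only once $r_{i+j}\in\gamma$ is known. A cubic through nine points gives you only the relations among those nine; in Theorem~\ref{thm:Milicevic} this suffices only because nine of the ten points are already on $\gamma$. The Cayley--Bacharach route is the right one, and it makes the reducible cases irrelevant. For example, the line triples $b_0b_3, b_1b_4, b_2b_5$ and $b_1b_2, b_0b_5, b_3b_4$ meet in exactly $b_0,\dots,b_5,r_3,r_5,r_7$. You still have to organize the induction so that each new point is the ninth point of such a grid whose other eight points are already known to lie on $\gamma$.
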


Theorem~\ref{thm:conic-line} will then follow from the following lemma, which shows that the hypotheses of Theorem~\ref{thm:KP} will be satisfied if we assume that the blue points lie on a circle.
%\sujoy{can we add some connectors here?}

\begin{lemma}\label{lemma:3}
Let $n\geq 6$ blue points on a circle, and $n$ red points, disjoint from the blue points, be given.
If the line through any two blue points contains a red point, then the red points all lie outside the circle.
\end{lemma}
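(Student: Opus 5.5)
The plan is a double-counting argument on the $\binom{n}{2}$ chords spanned by the blue points, combined with the convex-position structure of points on a circle. Call a red point \emph{interior}, \emph{on the circle}, or \emph{exterior} according to its position relative to the circle. We label the blue points $b_1,\dots,b_n$ in cyclic order. Every line meets the circle in at most two points, so the chords (lines through two blue points) passing through a fixed red point $r$ use pairwise disjoint pairs of blue points. Hence $r$ lies on at most $\lfloor n/2\rfloor$ chords. If $r$ is on the circle, it lies on no chord at all, since any line through $r$ meets the circle in at most one further point.

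Every chord must contain a red point, so
\[
\binom{n}{2}\le \sum_{r} c(r),
\]
where $c(r)\le\lfloor n/2\rfloor$ is the number of chords through $r$. For $n$ odd there is no slack: every red point lies on exactly $(n-1)/2$ chords, and every chord contains exactly one red point. For $n$ even the total slack is $n\cdot n/2-\binom n2=n/2$. In particular, a red point on the circle has deficiency $\lfloor n/2\rfloor\ge 3$, which already exceeds the slack, so no red point lies on the circle. It then suffices to show that interior red points have large deficiency.

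The key geometric observation concerns an interior red point $r$ lying on the chord $b_ib_j$. Since $r$ is inside the disk, every other chord through $r$ crosses the segment $b_ib_j$. Such a chord therefore has one endpoint on each of the two arcs cut off by $b_i$ and $b_j$. So if the shorter arc contains $d-1$ blue points, then $c(r)\le d$. Moreover, any two chords through an interior point cross, so all chords through $r$ are ``long'' in this sense.

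I would apply this to the $n$ edges $b_ib_{i+1}$ and the $n$ short diagonals $b_ib_{i+2}$. An interior red point on an edge has $c(r)=1$, and one on a short diagonal has $c(r)\le 2$. The deficiencies are then $\lfloor n/2\rfloor-1\ge2$ and $\lfloor n/2\rfloor-2\ge1$ respectively, using $n\ge6$. For odd $n$ any deficiency is a contradiction, so every edge and short diagonal is covered by an exterior point. From there I would argue that any interior point $r$ must have all its chords long, and then derive a contradiction from the tightness, e.g. by comparing with the unique unmatched blue point at $r$.

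The main obstacle is the even case, where the slack $n/2$ allows a few interior red points of small deficiency. Here I would add a second count on the edges alone. An exterior point lies on at most two edge lines, since edge lines are supporting lines of the convex polygon. An interior point lies on at most one edge line, and then $c(r)=1$. If $m$ of the edges are covered by interior points, the deficiency count gives $m(n/2-1)\le n/2$, hence $m\le1$ for $n\ge6$. I would then try to exclude $m=1$ and interior points on long chords by showing that exterior points are forced to carry nearly all the load. To do this I would use that $n-m$ edges need at least $(n-m)/2$ exterior points, that each exterior point on two edge lines loses capacity on the short diagonals, and that the chord through $r$ paired with its neighbours must cross. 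The delicate part, which I expect to be the hardest, is this bookkeeping showing that the combined deficiency from edges, short diagonals and any interior point exceeds $n/2$.
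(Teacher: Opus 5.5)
Your chord count is correct, and it is the paper's bookkeeping in another form. Let $d(r)$ be the number of blue points $b$ such that the line $rb$ contains no other blue point; this is the degree of $r$ in the paper's bipartite graph $G$. Then $2c(r)+d(r)=n$, so your deficiency is $\lfloor d(r)/2\rfloor$ and your slack bound is the paper's $|E(G)|\le n$.

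Neither case is actually closed, however. For even $n$, a red point on the circle is not excluded by counting: its deficiency is $n/2$, which \emph{equals} the slack rather than exceeding it. In the odd case you do not need to analyse interior points on long chords. The missing fact is this: if $r$ is an exterior red point on the line $b_{i-1}b_{i+1}$, then $rb_i$ contains no other blue point. Indeed, if $rb_i$ met the circle again at $q$ on the other arc, the chords $b_iq$ and $b_{i-1}b_{i+1}$ would cross inside the disc, but the two lines already meet at the exterior point $r$. Tightness gives $d(r)=1$ for every red point. Hence the red points on the $n$ short diagonals, which your count shows are exterior, are pairwise distinct, so they are all $n$ red points. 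This is exactly the paper's perfect-matching argument.

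The even case is where the proposal genuinely fails, and not for lack of bookkeeping: for $n=6$ the statement is false. Let $b_1,\dots,b_6$ be a regular hexagon. Take as red points the three points at infinity in the edge directions; each lies on two parallel edges and a parallel long diagonal, e.g.\ $b_1b_2\parallel b_3b_6\parallel b_4b_5$. Add the three points $b_1b_3\cap b_2b_4$, $b_3b_5\cap b_4b_6$ and $b_5b_1\cap b_6b_2$, which lie at distance $1/\sqrt3$ from the centre. All $15$ chords contain a red point, yet three red points are inside the circle; a suitable projective map preserving the circle makes everything affine. The deficiencies are $0,0,0,1,1,1$, using the slack exactly. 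The red points are also not collinear, so Theorem~\ref{thm:conic-line} fails at $n=6$ as well. The paper's even-case step ``as in the odd case, any red point on $b_{i-1}b_{i+1}$ is outside the circle'' breaks here, since an interior such point only satisfies $d(r)\ge n-4$, not $d(r)=1$.

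For even $n\ge 8$ the step can be repaired in your language, as follows.
\begin{itemize}
\item Every red point on a short diagonal has positive deficiency. If it is exterior, $b_i$ is unmatched and $d(r)$ is even. If it is interior, its deficiency is at least $n/2-2$.
\item Each red point lies on at most two short diagonals. Two short diagonals through an interior point must cross, so they are $b_{i-1}b_{i+1}$ and $b_ib_{i+2}$. Through an exterior point, two short-diagonal arcs around the same tangent point would be nested, which is impossible since each contains a single blue point.
\item Hence exactly $n/2$ red points have deficiency exactly $1$. None of them is interior, because $n/2-2\ge 2$.
\item The remaining $n/2$ red points have deficiency $0$. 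An exterior point on two short diagonals cannot also lie on an edge line: that gives three invariant arcs, only two tangent points, and no admissible nesting. So these $n/2$ points cover all $n$ edges, at most two each.
\item None of them is interior, since an interior point on an edge has deficiency $n/2-1$.
\end{itemize}
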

\begin{proof}
Denote the set of blue points by $B$ and the set of red points by $R$.
We define a bipartite graph $G$ with parts $B$ and $R$ by connecting $b\in B$ and $r\in R$ if the line through $b$ and $r$ does not contain any other blue points.

We first show that each $b\in B$ has degree at most $1$ in $G$.
Consider the $n-1$ lines $bb'$ where $b'\in B\setminus\{b\}$.
On each of these lines there is a distinct red point.
Thus, there is at most one red point $r$ such that $br$ contains no other blue point.
In other words, $b$ is connected to at most one red point in~$G$.

We now distinguish between two cases depending on the parity of $n$.

The case of odd $n$ is simpler.
We claim that in this case, each $r\in R$ has degree at least $1$ in $G$.
Note that each line through $r$ and a blue point contains either $1$ or $2$ blue points.
Since $n$ is odd, the number of these lines with one blue point is odd, so there is at least one.

It now follows that $G$ is a perfect matching (still when $n$ is odd).
Label the blue points $b_1,b_2,\dots,b_n$ in the order that they appear on the circle.
We consider indices to be modulo $n$.
For each $i=1,\dots,n$, let $r_i$ be the red point matched to $b_i$ in $G$.
Fix an index $i$.
Let $r_j$ be a red point on $b_{i-1}b_{i+1}$, and suppose that $r_j$ is between $b_{i-1}$ and $b_{i+1}$.
Since all lines through $r_j$ and a blue point contain two blue points except for $b_jr_j$, there can only be two more blue points apart from $b_{i-1}, b_i, b_{i+1}$.
This contradicts the assumption $n\geq 6$.
It follows that $r_j$ must be outside the circle, and $b_j=b_i$.
This shows that each red point $r_i$ lies on the line $b_{i-1}b_{i+1}$ outside the circle.
Thus all red points are outside the circle.
This finishes the case where $n$ is odd.

We now assume that $n$ is even.
Again, since each line through an $r\in R$ and a blue point contains either $1$ or $2$ blue points, it now follows that the number of lines with one blue point is even.
Thus, each $r\in R$ has even degree in $G$.
Since we have already shown that $G$ has at most $n$ edges, it follows that the number of red points with non-zero degree in $G$ is at most $n/2$.
Thus the total number of red points with zero degree is at least $n/2$.
As in the odd case, it follows from the assumption $n\geq 6$ that any red point $r$ on a line $b_{i-1}b_{i+1}$ is outside the circle.
Since there is only one blue point on $rb_i$, the degree of $r$ in $G$ is non-zero.

Furthermore, a red point can belong to at most two lines of the form $b_{i-1}b_{i+1}$, so as we go through all $n$ values of $i$, we obtain at least $n/2$ points of non-zero degree in this way.
Therefore, there are exactly $n/2$ points of non-zero degree, with each of degree exactly $2$, and each lying on two lines of the form $b_{i-1}b_{i+1}$ outside the circle.

It also follows then that there are exactly $n/2$ points of degree $0$ in $G$.
Fix an $i$ and consider a red point $r$ on $b_ib_{i+1}$.
If $r$ is inside the circle, then as before we obtain a contradiction.
Thus all of these points are outside the circle.
If one of them has non-zero degree, it has to lie on two lines of the form $b_{i-1}b_{i+1}$ as well as one of the form $b_ib_{i+1}$, which is not possible.
Thus they all have degree zero in $G$.
Since a red point can belong to at most two lines of the form $b_ib_{i+1}$, it follows that there are at least $n/2$ red points on the lines $b_ib_{i+1}$, all of degree $0$.
Thus all degree zero points are also outside the circle.
This concludes the proof for $n$ even.
\end{proof}
Using Keller and Pinchasi's proof, it is easy to see that in the above situation furthermore the $n$ blue points are projectively equivalent to the vertices of a regular $n$-gon on a circle.
\begin{corollary}\label{conj:milicevic-special}
Given a set of $n\geq 6$ blue points on a conic, and a set of $n$ red points, disjoint from the blue points, such that the line through any two blue points contains a red point, then it follows that the red points lie on a line, and after some projective transformation, the line is the line at infinity, the conic is a circle, and the blue points form the vertex set of a regular $n$-gon.
\end{corollary}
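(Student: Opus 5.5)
Our plan is to reduce to the circle case, invoke Theorem~\ref{thm:KP} through Lemma~\ref{lemma:3}, and then pin down the structure using the cubic group law.

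First, we apply a projective transformation mapping the conic to a circle. We may also assume no red point lies at infinity: choose the line sent to infinity to avoid the finitely many points. All hypotheses are projectively invariant. The conic is non-degenerate, since if it were a line pair, three blue points would be collinear for $n\ge 6$, and their line would contain them all. Lemma~\ref{lemma:3} now puts every red point outside the circle.

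Next we check the hypothesis of Theorem~\ref{thm:KP}. For blue points $b,b'$ and a red point $r$ on line $bb'$, the chord $bb'$ lies inside the closed disk. Since $r$ is outside, it is not between $b$ and $b'$. No three blue points are collinear because they lie on a circle. So all $2n$ points lie on a cubic $\gamma$.

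We then analyse $\gamma$.
\begin{itemize}
\item If $\gamma$ is irreducible, the $n\ge 6$ blue points lie on both the conic and $\gamma$. B\'ezout allows at most $6$ common points unless they share a component, so this is impossible for $n\ge 7$. For $n=6$ we would need a separate argument, which we expect to be the main obstacle. For $n=6$ we try the group law. Each red point lies on a line with two blue points, so each red point $r$ equals $\ominus b\ominus b'$, and no red point is singular. The matching structure from the even case of Lemma~\ref{lemma:3} gives relations like $b_{i-1}\oplus b_{i+1}=b_j\oplus b_{j+2}$. These should force the six blue points onto a conic only in a degenerate way, giving a contradiction. Alternatively, we would use Keller--Pinchasi's proof directly, which shows the blue points form a regular polygon up to projectivity. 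That structure rules out an irreducible cubic through them.
\item Otherwise $\gamma$ contains the conic as a component, since the conic meets $\gamma$ in at least $7$ points, or $6$ with the exception above. Then $\gamma$ is the conic together with a line $\ell$. Red points are not on the circle, because they lie outside it, so all red points lie on $\ell$.
\end{itemize}

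Finally, the corollary's structural part. Send $\ell$ to infinity. Since the red points lie outside the circle, $\ell$ is disjoint from the conic, so after a further projective map the conic is a circle and $\ell$ is the line at infinity. With the circle-group parametrisation $f$ of the conic-plus-line cubic used in the proof of Theorem~\ref{thm:Milicevic}, the blue angles $\theta_i$ have pairwise sums taking exactly $n$ values mod $2\pi$. This is the equality case of $|A+A|$ for distinct pairs in the circle group, which forces an arithmetic progression, or a coset of a subgroup after rotation. A Freiman/Vosper-type argument, or the elementary ordering argument of Keller--Pinchasi, should show the angles are equally spaced, so the blue points form a regular $n$-gon.
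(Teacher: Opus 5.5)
\textbf{Verdict: there is a genuine gap, and it cannot be filled, because the statement is false for $n=6$.} Your route is the one the paper intends: map to a circle, apply Lemma~\ref{lemma:3}, then Theorem~\ref{thm:KP}, then B\'ezout. You correctly flag that B\'ezout does not exclude an irreducible cubic when $n=6$. But neither of your suggested fixes can close that case.

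Take a cyclic subgroup of order $12$ in the group of a smooth or acnodal real cubic $\gamma$, with a flex as identity. Colour the odd multiples of a generator blue and the even multiples red.
\begin{itemize}
\item The six blue points sum to $o$, so they lie on a conic.
\item That conic is non-degenerate, since no three odd multiples sum to $o$.
\item The third point of $\gamma$ on the line through two blue points is an even multiple, hence red.
\item The red points are not collinear, because $\gamma$ is irreducible.
\end{itemize}

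Explicitly, take $y^2=x^3-x^2$ parametrised by $m\mapsto(m^2+1,\,m(m^2+1))$. Three points are collinear iff $m_1m_2+m_1m_3+m_2m_3=1$; writing $m_j=\tan\theta_j$, this says $\theta_1+\theta_2+\theta_3\equiv 90^\circ\pmod{180^\circ}$.
\begin{itemize}
\item The blue points are those with $\theta=15^\circ,45^\circ,\dots,165^\circ$: namely $(2,\pm2)$, $(8+4\sqrt3,\pm(28+16\sqrt3))$ and $(8-4\sqrt3,\pm(28-16\sqrt3))$. All six lie on the hyperbola $y^2=17x^2-48x+32$.
\item The red points are those with $\theta=0^\circ,30^\circ,\dots,150^\circ$: namely $(1,0)$, $(4/3,\pm4\sqrt3/9)$, $(4,\pm4\sqrt3)$, and the point at infinity of the $y$-axis.
\end{itemize}
So the group relations you hoped would be contradictory are satisfiable, and the corollary (like Theorem~\ref{thm:conic-line}) can hold at best for $n\ge7$.

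The same example shows that Lemma~\ref{lemma:3}, which you use as a black box, already fails at $n=6$. The red point $(1,0)$ lies in the interior of the conic, on the chord joining the blue points with $m=2\pm\sqrt3$. In the even case of that proof, an interior red point on $b_{i-1}b_{i+1}$ is only forced to have degree at least $n-4$ in $G$. This equals $2$ when $n=6$, so no contradiction arises. The paper's argument has the same hole.

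For $n\ge7$ your argument goes through and matches the paper's, but two inferences need repair.
\begin{itemize}
\item ``The red points lie outside the circle, so $\ell$ misses the conic'' is a non sequitur, since a secant line has many exterior points. Argue instead as follows. If $\ell$ were tangent or secant, the group of the conic-plus-line cubic would be $(\mathbb{R},+)$ or $(\mathbb{R}^*,\cdot)$. There, $n\ge6$ elements always have more than $n$ distinct sums (or products) of two distinct elements; in $(\mathbb{R},+)$ there are at least $2n-3$. Yet the $\binom n2$ blue chords meet $\ell$ only at the $n$ red points.
\item Your exclusion of a degenerate conic is invalid. For even $n$, take blue points $(2^i,0)$ and $(0,2^i)$ for $0\le i<n/2$, and red points at the origin and at the $n-1$ points at infinity of slopes $-2^k$, $|k|<n/2$. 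This satisfies all hypotheses with non-collinear red points, so non-degeneracy must be assumed rather than derived.
\end{itemize}
Your restricted-sumset argument in the circle group for the regular-polygon conclusion is a reasonable substitute for the paper's appeal to Keller--Pinchasi's proof, but as written it is only a sketch.
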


\section{Expected number of monochromatic lines}
In this section, we consider random two-colorings of a set of points.
If we have $n$ points with no three on a line, then there are $\binom{n}{2}$ lines.
If each point is independently and uniformly colored red or blue, then the probability that a line is monochromatic is $1/2$, which gives that the expected number of monochromatic lines is $\frac12\binom{n}{2}$.
This is clearly the maximum for a given $n$.
On the other hand, the minimum expected number is trivially $2^{-n+1}$ if all points are collinear.
We are thus interested in the minimum expected number of monochromatic lines in a randomly two-colored set of $n$ points that are not all on a line.
Note that the \emph{near-pencil on $n$ points}, which is any set of $n$ points with exactly $n-1$ points on a line (Figure~\ref{fig:near-pencil}) has an expected number of monochromatic lines of $(n-1)/2 + 2^{-n+2}$.
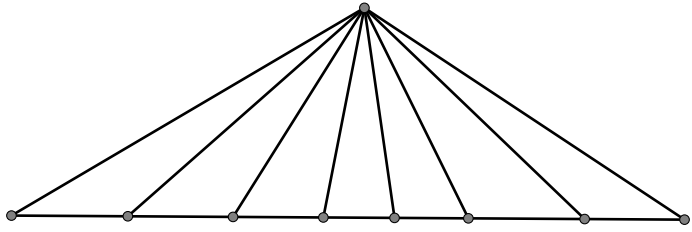
\begin{figure}
\centering
\begin{tikzpicture}[line cap=round,line join=round,scale=0.9]
\draw [line width=1pt] (-6.15,2.11)-- (3.69,2.05);
\draw [line width=1pt] (-6.15,2.11)-- (-0.99,5.17);
\draw [line width=1pt] (-4.448355950477748,2.0996241216492546)-- (-0.99,5.17);
\draw [line width=1pt] (-2.9102424062163066,2.0902453805257095)-- (-0.99,5.17);
\draw [line width=1pt] (-1.5902914823214485,2.0821968992824478)-- (-0.99,5.17);
\draw [line width=1pt] (-0.5497204149161617,2.075851953749489)-- (-0.99,5.17);
\draw [line width=1pt] (0.5299955385358962,2.069268319886976)-- (-0.99,5.17);
\draw [line width=1pt] (2.229810387775588,2.0589035951964902)-- (-0.99,5.17);
\draw [line width=1pt] (3.69,2.05)-- (-0.99,5.17);
\draw [draw=black,fill=gray] (-6.15,2.11) circle (2pt);
\draw [draw=black,fill=gray] (3.69,2.05) circle (2pt);
\draw [draw=black,fill=gray] (-0.99,5.17) circle (2pt);
\draw [draw=black,fill=gray] (-4.448355950477748,2.0996241216492546) circle (2pt);
\draw [draw=black,fill=gray] (-2.9102424062163066,2.0902453805257095) circle (2pt);
\draw [draw=black,fill=gray] (-1.5902914823214485,2.0821968992824478) circle (2pt);
\draw [draw=black,fill=gray] (-0.5497204149161617,2.075851953749489) circle (2pt);
\draw [draw=black,fill=gray] (2.229810387775588,2.0589035951964902) circle (2pt);
\draw [draw=black,fill=gray] (0.5299955385358962,2.069268319886976) circle (2pt);
\end{tikzpicture}
\caption{Near-pencil: $n$ non-collinear points with exactly $n-1$ on a line}\label{fig:near-pencil}
\end{figure}
For $n=7$ there is an example with a smaller expectation:
Consider the vertices, midpoints of edges, and centroid of a triangle.
For this set of $7$ points, the expectation is exactly $3$.
We show that if $n$ is sufficiently large, the near-pencil has the minimum expectation among all non-collinear sets of $n$ points.
We show a more general version where blue and red are not necessarily equiprobable.

\begin{theorem}\label{thm:random}
Let $0\leq p\leq 1$ and $n\geq 10$.
If each of the points of a non-collinear set of $n$ points is colored uniformly and independently at random blue with probability $p$ and red with probability $1-p$, then the expected number of monochromatic lines is uniquely minimized by the near-pencil on $n$ points.
\end{theorem}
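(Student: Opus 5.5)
Write $q=1-p$ and $f(k)=p^k+q^k$. A line with $k$ points is monochromatic with probability $f(k)$, so the expectation is $E(P)=\sum_{L}f(|L\cap P|)$, summed over all connecting lines. The near-pencil gives $(n-1)f(2)+f(n-1)$. The plan is to show $E(P)>(n-1)f(2)+f(n-1)$ for every non-collinear $P$ that is not a near-pencil.

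\emph{Degenerate cases and basic facts.} If $p\in\{0,1\}$, then $f\equiv1$ and $E(P)$ is the number of lines. The de Bruijn--Erd\H{o}s theorem (at least $n$ lines, with equality in the real plane only for the near-pencil) settles this case, so assume $0<p<1$. Then $f$ is strictly decreasing in $k$, since $f(k)-f(k+1)=p^kq+q^kp>0$. It is also convex in $k$, being a sum of two exponentials. Convexity gives the merging inequality
\[
f(a)+f(b)\ \ge\ f(2)+f(a+b-2)\qquad(a,b\ge2),
\]
strictly if $a,b\ge3$. It lets me compare multisets of line sizes through majorization.

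\emph{Case split on the longest line.} Let $\ell$ be a line with the maximum number $m$ of points, and set $s=n-m\ge2$ (since $m\le n-2$ when $P$ is not a near-pencil). Fix two points $x,y\notin\ell$. The $m$ lines joining $x$ to the points of $\ell$ are distinct, and each contains at most $s$ points off $\ell$, so at most $s+1$ points in all. The same holds for $y$, and at most one of these lines (the line $xy$) is shared. Together with $\ell$ this yields at least $2m$ lines. Their sizes $k_L$ are controlled because every point off $\ell$ lies on exactly one $x$-line. For example, $\sum_{x\text{-lines}}(k_L-2)=s-1$.

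\emph{Reduction to a near-pencil-shaped sum.} Using the merging inequality repeatedly within the $x$-lines, their contribution is at least $(m-1)f(2)+f(s+1)$. The same holds for the $y$-lines. Adding $f(m)$ for $\ell$ gives
\[
E(P)\ \ge\ (2m-2)f(2)+2f(s+1)+f(m)-(\text{correction for }xy).
\]
Since $m\ge n/2$ roughly whenever $s$ is small, and $m(n-m)+1\ge 2n-3$ lines exist when $s\ge2$, this exceeds $(n-1)f(2)+f(n-1)$ by a margin of order $(m-1)f(2)$. It remains to compare with $f(n-1)$, which is tiny: $f(n-1)\le f(2)\max(p,q)^{n-3}$.

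When $m$ is small (say $m<n/2$), I instead use the crude count: the number of lines is at least $\binom{n}{2}/\binom{m}{2}$, which is large. Each line then contributes at least $f(m)$. By convexity, $f(m)\ge f(2)\cdot\min(p,q)^{m-2}$ is not good enough uniformly in $p$. So here I would instead distribute $f(2)$ weights via the incidence count: $\sum_L\binom{k_L}{2}=\binom n2$, and a Jensen/majorization argument shows $\sum_L f(k_L)$ is minimized when the sizes are as unequal as allowed.

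\emph{Main obstacle.} The hard part is making these inequalities uniform in $p\in(0,1)$. The worst case is presumably $p=1/2$, where $f(k)=2^{1-k}$ decays fastest. Configurations with mid-size $m$ and several long lines through few points are the delicate ones: the triangle-with-midpoints example at $n=7$ shows the bound genuinely fails for small $n$. I would first prove the reduction that, for fixed sizes, $E$ is minimized at $p=1/2$. I expect this to follow from symmetry plus convexity of $p\mapsto f(k)/f(2)$ ratios. Then I would verify the explicit inequality in $m$ and $s$ at $p=1/2$ for $n\ge10$, checking the finitely many small $(m,s)$ cases by hand.
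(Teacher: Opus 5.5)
Your proposal has a genuine gap, starting with the merging inequality, which points the wrong way. For $a,b\ge 2$ the pair $(2,a+b-2)$ majorizes $(a,b)$, so convexity of $f$ gives $f(2)+f(a+b-2)\ge f(a)+f(b)$. For example, at $p=\tfrac12$ one has $2f(3)=\tfrac12<\tfrac58=f(2)+f(4)$.

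So $(m-1)f(2)+f(s+1)$ is the \emph{largest} possible contribution of the $x$-lines, not a lower bound. The same reversal invalidates your later claim that $\sum_L f(k_L)$ is minimized when the sizes are as unequal as possible. (Also, a point $z\notin\ell$ need not lie on an $x$-line, since $xz$ may meet $\ell$ outside $P$. You only get $\sum(k_L-2)\le s-1$, which is harmless because $f$ is decreasing.)

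The correct lower bound spreads the extra points, giving about $(m-s+1)f(2)+(s-1)f(3)$ for the $x$-lines. With this, at $p=\tfrac12$ your two-point count yields roughly $m-s/2$ against the target $(m+s-1)/2$. It therefore succeeds only when $m$ is at least about $2n/3$. That is the easy regime, and the paper handles it directly. A non-near-pencil with more than $2n/3$ points on a line has at least $2n-6$ ordinary lines (Erd\H{o}s--Purdy: each of the $s$ outside points has at least $m-s+1$ ordinary lines to $\ell$). Then $(2n-6)f(2)\ge(n-1)f(2)+f(n-1)$.

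When every line has at most about $2n/3$ points, you have no working argument, and counting alone cannot supply one. The identity $\sum_L\binom{k_L}{2}=\binom{n}{2}$ together with convexity of $f$ also holds for a finite projective plane of order $q$: it has $n=q^2+q+1$ points and every line has size $q+1$. Its expectation at $p=\tfrac12$ is $n\cdot 2^{-q}$, far below $(n-1)/2$. What is missing is real-plane geometry that forces many short lines.

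The paper supplies this as follows. It adds $\bigl(f(2)-\tfrac23f(3)\bigr)$ times Melchior's inequality $\sum_i(3-i)t_i\ge3$ to $\tfrac13f(3)$ times Langer's inequality $\sum_i it_i\ge n(n+3)/3$, which is valid when at most $2n/3$ points are collinear. It then uses the convexity bound $f(i)\ge(3-i)f(2)+(i-2)f(3)$ for every $i\ge2$ to get $\sum_i f(i)t_i\ge3f(2)-2f(3)+\tfrac{n(n+3)}{9}f(3)$. Since $f$ is decreasing and $f(2)\le2f(3)$ (indeed $2f(3)-f(2)=(p-q)^2$), this exceeds $(n-1)f(2)+f(n-1)$ for $n\ge10$.

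That argument is uniform in $p$, so your unproved reduction to $p=\tfrac12$ is not needed. Note also that your proposed final check over pairs $(m,s)$ would not be finite, since $n$ is unbounded.
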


To show this theorem, we need two inequalities.
The first is an elementary consequence of the Euler formula, and in dual form was first shown by Melchior \cite{melchior1941vielseite}.
\begin{lemma}[Melchior inequality]
In a set $S$ of $n$ points in the real plane, if we let $t_i$ denote the number of lines containing exactly $i$ points of $S$, then we have
\[\sum_{i\geq 2} (3-i)t_i\geq 3.\]
\end{lemma}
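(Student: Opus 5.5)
The plan is to prove the dual statement by Euler's formula on the real projective plane, which is how Melchior proved it. First note that the inequality requires $S$ to be non-collinear, as in the surrounding context. If all $n\ge 3$ points were on one line, the left side would be $3-n\le 0$. So I assume $S$ is not contained in a line.

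Embed $S$ in the real projective plane $\mathbb{RP}^2$ and apply projective duality. Each point of $S$ becomes a line, giving an arrangement $\mathcal{A}$ of $n$ lines that are not all concurrent. A line spanned by $S$ containing exactly $i$ points of $S$ becomes a point of $\mathbb{RP}^2$ where exactly $i$ lines of $\mathcal{A}$ meet. So $t_i$ is the number of vertices of $\mathcal{A}$ of multiplicity $i$.

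Next I would check that $\mathcal{A}$ induces a cell decomposition of $\mathbb{RP}^2$.
\begin{itemize}
\item The vertices are the intersection points.
\item The edges are the segments into which the vertices cut the lines.
\item The faces are the connected components of the complement.
\end{itemize}
Since the lines are not all concurrent, each line meets the others in at least two distinct points. A projective line with $k\ge 1$ marked points is cut into exactly $k$ arcs, so every edge is an open arc. Each face is an open convex polygon in a suitable affine chart and hence a $2$-cell.

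Now count.
\begin{itemize}
\item Vertices: $V=\sum_{i\ge2}t_i$.
\item Edges: each vertex of multiplicity $i$ lies on $i$ lines. Summing over lines the number of vertices on each, which equals the number of edges on each, gives $E=\sum_{i\ge 2} i\,t_i$.
\item Euler: since $\chi(\mathbb{RP}^2)=1$, we have $V-E+F=1$.
\item Faces: each edge borders exactly two face-sides. Every face has at least $3$ sides, because two distinct lines bound no digon in $\mathbb{RP}^2$ unless all lines pass through a common point. Hence $2E\ge 3F$.
\end{itemize}
Combining these,
\[
3=3V-3E+3F\le 3V-3E+2E=3V-E=\sum_{i\ge2}(3-i)t_i,
\]
which is the claimed inequality.

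The main obstacle is the topological justification rather than the counting. One must show that the arrangement really gives a CW structure on $\mathbb{RP}^2$ whose faces are polygons with at least three sides. The non-concurrency hypothesis is used exactly here.
\begin{itemize}
\item If all lines passed through one point, there would be a single vertex with no bounded polygonal faces, and the faces would be digons.
\item Otherwise, take a face and a line $L$ of $\mathcal{A}$ not meeting its closure. Passing to the affine chart $\mathbb{RP}^2\setminus L$, the face is a convex region bounded by lines. It is bounded because it avoids $L$, which becomes the line at infinity. A bounded convex polygon cut out by lines has at least three sides.
\end{itemize}
Such a line $L$ exists since not all lines pass through a common point of the face's closure. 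If a direct existence argument for $L$ proves awkward, I would fall back on the standard fact that every cell of a non-pencil projective arrangement is a convex polygon with at least three sides.
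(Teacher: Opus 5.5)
Your route is the one the paper has in mind. The paper gives no proof and only attributes the inequality to Melchior via Euler's formula in dual form. Your counting $V=\sum t_i$, $E=\sum i\,t_i$, $V-E+F=1$, $2E\ge 3F$ is correct, and you are right that non-collinearity must be added to the hypothesis.

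The gap is in the step you flagged: showing that every face has at least three sides. You assert that for each face $F$ of a non-pencil arrangement some line of $\mathcal{A}$ is disjoint from $\overline{F}$. You infer this from the fact that no point of $\overline{F}$ lies on all the lines. That inference fails, because each line may meet $\overline{F}$ at a different point, and the claim itself is false. Already for three lines in general position, each of the four faces is a triangle with one side on each line, so every line of $\mathcal{A}$ meets the closure of every face. The same happens in the dual of a near-pencil: every face is a triangle with a vertex at the pencil point and its opposite side on the extra line. So no chart $\mathbb{RP}^2\setminus L$ with $L\in\mathcal{A}$ makes such a face bounded, and your argument gives nothing there. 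Your fallback ``standard fact'' is true, but it is exactly the point that needs proof; your remark that ``two lines bound no digon unless all lines are concurrent'' restates it rather than proving it.

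Here is a short correct replacement. Suppose a face $F$ has at most two sides. Then its boundary edges, and hence $\partial F$, lie on two lines $L_1,L_2\in\mathcal{A}$ (possibly equal). $F$ is connected and open in $\mathbb{RP}^2$. Since $\overline{F}\setminus F\subseteq L_1\cup L_2$, it is also closed in $\mathbb{RP}^2\setminus(L_1\cup L_2)$. So $F$ is an entire component of $\mathbb{RP}^2\setminus(L_1\cup L_2)$. In the chart $\mathbb{RP}^2\setminus L_1$, this component is an open half-plane bounded by $L_2$. If $L_1=L_2$ it is the whole chart, and then there are no other lines at all. No line of $\mathcal{A}$ meets $F$, so every other line is parallel to $L_2$ in this chart, i.e.\ passes through $L_1\cap L_2$, and $\mathcal{A}$ is a pencil.

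Two further facts complete your proof. First, each face is an open convex set in any chart $\mathbb{RP}^2\setminus L$ with $L\in\mathcal{A}$, so faces are $2$-cells. Second, each edge borders at most two faces, so the total number of face-sides is at most $2E$.
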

The following inequality, due to Langer \cite[Proposition 11.3.1]{langer2003}, does not have an elementary proof.
See \cite{deZeeuw-survey} for an overview of consequences of this inequality.
\begin{lemma}[Langer inequality]\label{lemma:Langer}
In a set $S$ of $n$ points in the plane (real or complex) with at most $2n/3$ on a line, if we let $t_i$ denote the number of lines containing exactly $i$ points of $S$, then we have
\[\sum_{i\geq 2} it_i\geq n(n+3)/3.\]
\end{lemma}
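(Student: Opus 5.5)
Since the paper cites this lemma from \cite{langer2003} and no elementary proof is known, my plan is to recover it as a specialization of Langer's orbifold Bogomolov--Miyaoka--Yau inequality. I will not attempt a combinatorial argument.

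\emph{Dualize and set up the pair.} I would pass to the projective dual and work over $\mathbb{C}$; the real case is a special case. The $n$ points become an arrangement $\mathcal{A}$ of $n$ lines $L_1,\dots,L_n$ in $\mathbb{P}^2$. A line through exactly $i$ points of $S$ becomes a point where exactly $i$ lines of $\mathcal{A}$ meet. The hypothesis that at most $2n/3$ points of $S$ are collinear becomes: every point of $\mathcal{A}$ has multiplicity $i\le 2n/3$. Put $D=\sum_j L_j$ and consider the pair $(\mathbb{P}^2,\alpha D)$ with $\alpha=3/n$. With this choice $K_{\mathbb{P}^2}+\alpha D\equiv -3H+3H=0$, which is nef. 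Moreover, at an $i$-fold point the pair is log canonical exactly when $\alpha i\le 2$, that is, $i\le 2n/3$. So the hypothesis is precisely what is needed to apply Langer's inequality, which I would take in the form
\[
3\,e_{\mathrm{orb}}(\mathbb{P}^2,\alpha D)\;\ge\;(K+\alpha D)^2=0.
\]

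\emph{Compute the orbifold Euler number.} This is a local computation. I would write $e_{\mathrm{orb}}$ as $e(\mathbb{P}^2)$, minus $(1-(1-\alpha))=\alpha$ times the topological Euler number of the generic part of each line, plus a local orbifold Euler number $e_{\mathrm{orb}}(x;\alpha D)$ at each multiple point. For an ordinary $i$-fold point with $\alpha i\le 2$, Langer's formula gives $e_{\mathrm{orb}}=(2-\alpha i)^2/4$. Each line $L_j$ meets the other lines in some number $\nu_j$ of points, so its generic part has Euler number $2-\nu_j$. Summing over the lines gives $\sum_j \nu_j=\sum_i i\,t_i$. Putting these together,
\[
e_{\mathrm{orb}}=3-\alpha\Bigl(2n-\sum_i i\,t_i\Bigr)-\sum_i t_i+\sum_i t_i\,\frac{(2-\alpha i)^2}{4}.
\]

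\emph{Simplify.} I would substitute $\alpha=3/n$ and use the incidence identity $\sum_i\binom{i}{2}t_i=\binom n2$ to eliminate $\sum_i i^2t_i$. The terms in $\sum_i t_i$ cancel, since $(2-\alpha i)^2/4-1=-\alpha i+\alpha^2i^2/4$. After this bookkeeping the inequality $e_{\mathrm{orb}}\ge0$ should reduce exactly to $\sum_i i\,t_i\ge n(n+3)/3$.

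The main obstacle is the deep input: Langer's orbifold Miyaoka--Yau inequality for log canonical pairs with nef log canonical class, together with his formula for the local orbifold Euler numbers at ordinary singular points. These I would cite rather than reprove. The rest is careful arithmetic, where the only delicate points are
\begin{itemize}
\item the boundary case $\alpha i=2$, where the local contribution vanishes;
\item checking that no genericity assumption beyond log canonicity is needed.
\end{itemize}
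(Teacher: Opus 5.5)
Your argument is correct, and it takes the same route the paper relies on. The paper gives no proof of its own; it cites Langer's Proposition 11.3.1, and your argument is the standard derivation of that result from Langer's orbifold Bogomolov--Miyaoka--Yau inequality applied to $(\mathbb{P}^2,\tfrac{3}{n}\sum_j L_j)$. Your bookkeeping also checks out. In fact the $\sum_i i\,t_i$ terms cancel as well, leaving $e_{\mathrm{orb}}=3-2\alpha n+\tfrac{\alpha^2}{4}\sum_i i^2t_i$. With $\alpha=3/n$ and $\sum_i i(i-1)t_i=n(n-1)$, the condition $e_{\mathrm{orb}}\ge 0$ is exactly $\sum_i i\,t_i\ge n(n+3)/3$.
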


\begin{proof}[Proof of Theorem~\ref{thm:random}]
If $p=0$ or $1$, then the theorem states that the total number of lines in a non-collinear set of $n$ points is uniquely minimized by the near-pencil, which is a well-known result by De Bruijn and Erd\H{o}s \cite{dBE48}.
Fix $p\in(0,1)$, and let $f(x)=p^x+(1-p)^x$, $x\in\mathbb{R}$.
If we color each point blue with probability $p$ and red with probability $1-p$, then the expected number of monochromatic lines is $\sum_{i\geq 2} f(i)t_i$.
For the near-pencil, this expectation is $(n-1)f(2)+f(n-1)$,
so we would like to prove that for all $n\geq 10$,
\begin{equation}\label{eq:random}
\sum_{i\geq 2} f(i)t_i\geq (n-1)f(2)+f(n-1).
\end{equation}

We first assume that there are at most $2n/3$ points on a line.
We add $(f(2)-\frac23f(3))$ times Melchior's inequality to $\frac13f(3)$ times Langer's inequality to obtain
\[\sum_{i\geq 2} ((3-i)f(2)+(i-2)f(3))t_i\geq 3f(2)-2f(3)+\frac{n(n+3)}{9}f(3).\]
To show \eqref{eq:random}, it is now sufficient to show the following two inequalities for each $i\geq 2$:
\begin{equation}\label{eq:random2}
f(i) \geq (3-i)f(2)+(i-2)f(3)
\end{equation}
and
\begin{equation}\label{eq:random3}
3f(2)-2f(3)+\frac{n(n+3)}{9}f(3) \geq (n-1)f(2)+f(n-1).
\end{equation}
Note that $f(x)$, being the sum of two exponential functions, is convex, hence
\[ f(3) \leq \frac{i-3}{i-2}f(2)+\frac{1}{i-2} f(i),\]
which immediately implies \eqref{eq:random2}.
Since \eqref{eq:random3} clearly holds when $n=3$, we may assume that $n\geq 4$, and then, since $f$ is decreasing, \eqref{eq:random3} will follow from 
\[\left(\frac{n(n+3)}{9}-3\right)f(3) \geq (n-4)f(2),\]
and since $f(2)/f(3)\geq 2$, this will in turn follow from 
\[ \frac{n(n+3)}{9}-3 \geq 2(n-4),\]
which can be checked to hold for all $n\geq 11$.
That \eqref{eq:random3} holds for $n=10$ can be checked separately.
This finishes the case where there are at most $2n/3$ points on a line.

Next suppose that there is a line that contains more than $2n/3$ points.
If we do not have a near-pencil, then the number of ordinary lines is bounded below by $2n-6$ \cite[Lemma 1]{erdospurdy1978}, which gives a lower bound for the expectation of $(2n-6)f(2)$, and we need to show that
\[ (2n-6)f(2)\geq (n-1)f(2) + f(n-1),\]
or equivalently,
\[ (n-5)f(2)\geq f(n-1),\]
which holds for all $n\geq 6$ because $f$ is decreasing.

Thus, the near pencil is the unique minimizer for all $n\geq 10$.
\end{proof}

\section{Monochromatic Circles} 
The Motzkin--Rabin Theorem guarantees a monochromatic line for any non-collinear bicolored set of points in the plane.
Just as there is an analogue of the Sylvester--Gallai Theorem for circles \cite{elliott1967}, we can ask for an analogue of the Motzkin--Rabin Theorem for circles.
\begin{theorem}[Elliott, 1967]
For any finite set of points in the plane not all on a circle or a line, there exists an ordinary circle, that is, a circle through exactly three points of the set.
\end{theorem}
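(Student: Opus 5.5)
The plan is to reduce the statement to the Sylvester--Gallai theorem (Theorem~\ref{thm:SG}) by inversion. Fix a point $p\in S$ and apply an inversion $\iota$ centred at $p$. Under $\iota$, circles through $p$ correspond to lines not through $p$, and lines through $p$ are mapped to themselves. Let $S'=\iota(S\setminus\{p\})$.

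First I check that $S'$ is not collinear. If $S'$ lay on a line not through $p$, then $S\setminus\{p\}$ would lie on a circle through $p$, so all of $S$ would lie on a circle. If $S'$ lay on a line through $p$, then all of $S$ would lie on that line. Both are excluded by hypothesis, so $S'$ spans ordinary lines. An ordinary line $L$ of $S'$ with $p\notin L$ pulls back under $\iota$ to a circle through $p$ and exactly two further points of $S$. That circle contains exactly three points of $S$, so it is an ordinary circle. Hence everything reduces to finding an ordinary line of $S'$ that avoids the centre $p$.

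This is the main obstacle. An ordinary line of $S'$ through $p$ corresponds to a line through $p$ meeting $S$ in exactly three points, and nothing prevents all ordinary lines of $S'$ from passing through $p$ for a badly chosen $p$. The model bad case is a near-pencil type configuration of $S'\cup\{p\}$. I would exploit the freedom in choosing $p$ and first try the following. Take $p$ to be a vertex of the convex hull of $S$, or more generally a point lying on as few lines spanned by $S$ with at least three points as possible.

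Let $T=S'\cup\{p\}$. Ordinary lines of $T$ not through $p$ are exactly the ordinary lines of $S'$ not through $p$. I would then apply Melchior's inequality $\sum_i(3-i)t_i\ge 3$ to $T$ and separate the contribution of the lines through $p$. The lines through $p$ partition $S'$, so their total contribution is controlled by the number of points of $S$ on them. The goal is to show this contribution is strictly less than $3$ unless $T$ is a near-pencil with $p$ off the long line, which would force a positive count of ordinary lines avoiding $p$.

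In the exceptional near-pencil case, $S'$ has $n-2$ points on a line $L$, and by the analysis above $p\notin L$. Then $S$ is a circle through $p$ (or a line) carrying $n-2$ points plus one more point. In that case I would change the centre of inversion to a different point $p'$, or simply exhibit an ordinary circle directly: the extra point together with two suitable points of the circle. This completes the plan.
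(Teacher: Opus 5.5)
The paper gives no proof of this statement; it quotes Elliott's theorem. In the question immediately following it, the paper counts a line as a special circle. Under that convention your first two paragraphs already form a complete proof, and the ``main obstacle'' does not exist. If the ordinary line $L$ of $S'$ given by Sylvester--Gallai passes through $p$, then $\iota$ maps $L\setminus\{p\}$ onto itself, so $|L\cap S|=|L\cap S'|+1=3$, and $L$ is itself an ordinary circle. If $L$ avoids $p$, you pull it back exactly as you describe.

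If you insist on genuine circles, the part of your plan that handles lines through $p$ has a real gap.
\begin{itemize}
\item \textbf{The Melchior count on $T$ is useless for your choice of $p$.} Let $n=|S|$ and let $m$ be the number of lines joining $p$ to the other points of $S$. These lines partition $S'$, so they contribute exactly $2m-(n-1)$ to Melchior's sum for $T$. Melchior therefore gives at least $n+2-2m$ ordinary lines of $S'$ missing $p$. This is positive only when $m\le (n+1)/2$, that is, when $p$ lies on many rich lines. That is the opposite of your proposed choice (a hull vertex, or a point on few rich lines).
\item \textbf{Your key claim is false.} You claim the contribution is less than $3$ unless $T$ is a near-pencil. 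If no three points of $S$ are collinear, then $m=n-1$ for every $p$, and the contribution is $n-1\ge 3$, with $T$ not a near-pencil. In that case no spanned line of $S'$ passes through $p$ at all, but your argument does not detect this.
\item \textbf{Melchior on $S'$ does not rescue it.} Separating the lines through $p$ gives at least $3-a_p$ ordinary lines of $S'$ missing $p$. Here $a_p$ is the number of lines through $p$ containing exactly three points of $S$. This is vacuous once every point lies on three or more such lines; for a finite subgroup of a smooth cubic, $a_p\approx n/2$ for every $p$.
\end{itemize}
Nothing in the proposal produces a $p$ for which either count is positive. So an additional ingredient is needed to guarantee an ordinary line of $S'$ avoiding $p$.

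Your exceptional case is also inconsistent. A near-pencil $T$ with $p$ off the long line would put all of $S'$ on a line missing $p$, making $S$ concyclic, which is excluded. The situation you then describe, with $n-2$ points of $S'$ on a line avoiding $p$, is a different configuration. The final step (``change the centre or exhibit a circle directly'') is not carried out.
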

\begin{question}
Given a set of red and blue points in $\mathbb{R}^2$ such that neither color class is on a line or a circle, does there exist a monochromatic circle (including a line as a special circle) passing through at least $3$ points?
\end{question}
The following negative examples show that we would need to make quite strong assumptions on the sets.

\begin{example}
Consider two circles $R$ and $B$ that intersect in two points $r$ and $b$.
For the blue points, choose the point $b$ and some more points on $B$, and for the red points, choose the point $r$ and some more points on $R$.
Then any three blue points determine the circle $B$, and any three red points determine the circle $R$.
Neither of these circles is monochromatic.
Note that each color class lies on a circle.
\end{example}
In the next example, one of the color classes does not lie on a circle.
\begin{example}
Fix a circle $R$ and choose some red points on $R$.
Then independently perturb infinitesimally each of these red points %by a small enough amount 
such that any three red points determine a circle that is very close to $R$.
Let one of the red points $r_1$ still be on the original circle $R$.
Next, choose a circle $B$ that intersects the circle $R$ in two distinct points, one of them being the red point $r_1$,
and the other intersection point $p$
(Figure~\ref{fig:2}).
\begin{figure}
\centering
\newcommand{\pointradius}{1.8pt}
\begin{tikzpicture}[line cap=round,line join=round,scale=0.8]
\draw [line width=1pt,color=gray!70!white] (0.4999932158767801,0.8771701556710643) circle (3.909356213824249cm);
\draw [line width=1pt,color=gray!70!white] (0.14891815856420174,0.8565344705644589) circle (4.108778898724375cm);
\draw [line width=1pt,color=gray!70!white] (-0.2182873264290806,0.834950662202383) circle (4.3381098582056445cm);
\draw [line width=1pt,color=gray!70!white] (0.9436340888330413,-0.3323088722767473) circle (4.849816976828116cm);
\draw [line width=1pt,color=gray!70!white] (0.6540212982051785,0.4572500206081628) circle (4.215085541359776cm);
\draw [line width=1pt,color=gray!70!white] (-0.30067642941570294,0.08682730229125611) circle (3.587074879385342cm);
\draw [line width=1pt,color=gray!70!white] (-0.14464860772423968,0.034270562353078356) circle (3.743517960110128cm);
\draw [line width=1pt,color=gray!70!white] (-0.021658142183904244,0.19508639773719538) circle (3.7526279215090272cm);
\draw [line width=1pt,color=gray!70!white] (-0.051991774331939154,0.2937971120061331) circle (3.8411612651608134cm);
\draw [line width=1pt,color=gray!70!white] (-0.30067642941570294,0.08682730229125611) circle (3.587074879385342cm);
\draw [line width=1pt,color=blue!70!white] (4.557537298097213,-0.7145663617494188) circle (2.3716663679885572cm);
\draw [line width=1pt,color=gray!70!white] (0.3188046081023837,0.6023953915805089) circle (4.237630917995001cm);
\draw [line width=1pt,color=red] (0.11,0.6) circle (3.9419280013117444cm);
\draw[color=red] (1.1,4.1) node {$R$};
\draw [fill=red] (2.7874798111753436,-2.293077639517137) circle (\pointradius) node[right] {$r_1$};
\draw [draw=black,fill=white] (3.93,1.57) circle (1.5pt) node[above left] {$p$};
\draw [fill=red] (2.400326056893482,4.29357193238288) circle (\pointradius);
\draw [fill=red] (-3.1586975165129654,2.254503859350623) circle (\pointradius);
\draw [fill=red] (-3.8877497884351224,0.09012992708174661) circle (\pointradius);
\draw [fill=red] (-0.9487578172489268,-3.4412170150411567) circle (\pointradius);
\draw[color=blue] (2.3,0.6821170531893088) node {$B$};
\draw [draw=black,fill=blue!70!white] (5.43923312971869,1.4871179529399854)   circle (\pointradius);
\draw [draw=black,fill=blue!70!white] (4.695925162854295,1.6530590646928667)  circle (\pointradius);
\draw [draw=black,fill=blue!70!white] (4.424048874733936,1.653340351336217)   circle (\pointradius);
\draw [draw=black,fill=blue!70!white] (4.332915267805289,1.646438997108448)   circle (\pointradius);
\draw [draw=black,fill=blue!70!white] (4.184696430158688,1.6276102003916344)  circle (\pointradius);
\draw [draw=black,fill=blue!70!white] (3.118700871820044,1.1707853559589732)  circle (\pointradius);
\draw [draw=black,fill=blue!70!white] (4.051387790666126,1.6024606206518017)  circle (\pointradius);
\draw [draw=black,fill=blue!70!white] (3.3653713559440597,1.3356877023651053) circle (\pointradius);
\draw [draw=black,fill=blue!70!white] (3.5252056090413215,1.420636824090057)  circle (\pointradius);
\draw [draw=black,fill=blue!70!white] (3.608231608808427,1.4588227127910742)  circle (\pointradius);
\end{tikzpicture}
    \caption{Five red points not lying on a line or a circle and $\binom{5}{2}$ blue points with no monochromatic circle}\label{fig:2}
\end{figure}
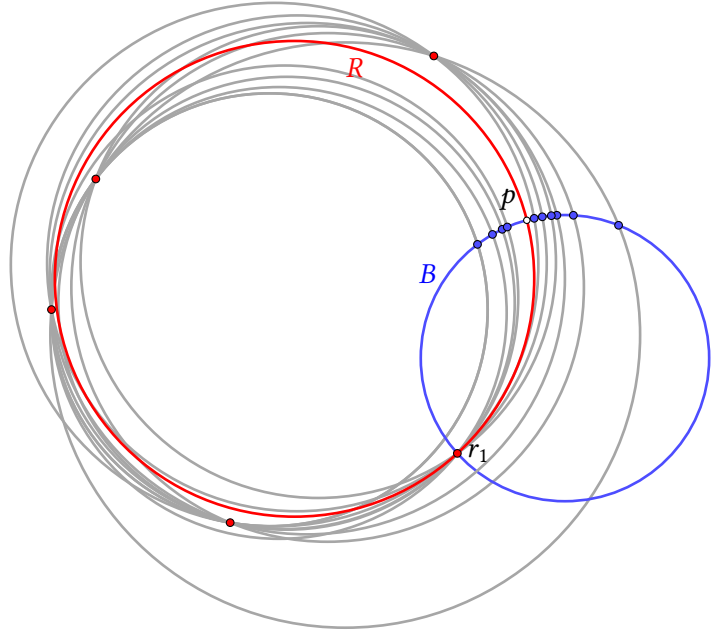
%\sujoy{should we highlight in the fig.? also can we enlarge the scale of the points a bit}. \konrad{I made the points a bit bigger.}
%Call the other intersection point $r_2$.
Then $B$ also intersects each of the circles through triples of red points in two points, one close to $r_1$, and one close to $p$.
These points are all distinct.
Choose as blue points the intersection points that are close to $p$.
Then the blue points lie on a unique circle that also contains a red point, so there is no blue monochromatic circle.
Also, each circle through three red points contains a blue point.
Thus we have found a set in which one color class is not on a circle, and with no monochromatic circle.
\end{example}
\begin{example}
Consider two concentric circles $B$ and $R$ centered at the origin of the plane.
On each circle, we can determine a point uniquely by specifying its angle from the positive $x$-axis.
Then two points on $B$ with angles $\alpha_1$ and $\alpha_2$, and two points on $R$ with angles $\beta_1$ and $\beta_2$, lie on a circle iff $\alpha_1+\alpha_2=\beta_1+\beta_2$.

Let $m$ be even.
On circle $B$, choose $m$ blue points with angles $\frac{2\pi k}{m}$, $k=0,1,\dots,m-1$ and $m$ red points with angles $\frac{2\pi k}{m}-\pi$, $k=0,1,\dots,m-1$.
On circle $R$, choose $m$ red points with angles $\frac{2\pi k}{m} -\frac{\pi}{2}$ and $m$ blue points with angles $\frac{2\pi k}{m}+\frac{\pi}{2}$, $k=0,1,\dots,m-1$.
Using the above criterion on when two points from $B$ and two points from $R$ are on a circle, it can then be checked that there is no monochromatic circle.
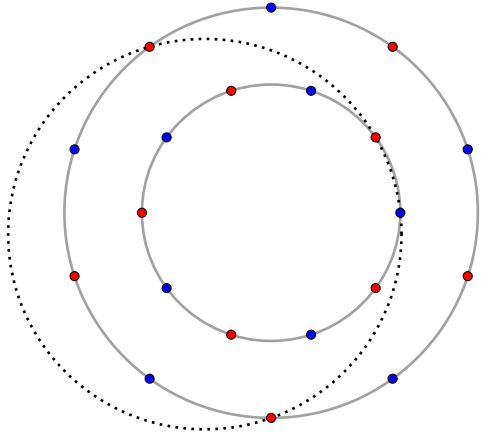
\begin{figure}
    \centering
\definecolor{aqaqaq}{rgb}{0.63,0.63,0.63}
\begin{tikzpicture}[scale=1.7]
\draw [line width=1pt,color=aqaqaq] (0,0) circle (1cm);
\draw [line width=1pt,color=aqaqaq] (0,0) circle (1.6cm);
\draw [line width=1pt,dotted] (-0.5132012757849717,-0.1667492026343929) circle (1.5223611258884904cm);
\draw [fill=blue] (1,0) circle (1pt);
\draw [fill=blue] (0.30901699437494745,0.9510565162951535) circle (1pt);
\draw [fill=blue] (-0.8090169943749473,0.5877852522924732) circle (1pt);
\draw [fill=blue] (-0.8090169943749476,-0.587785252292473) circle (1pt);
\draw [fill=blue] (0.30901699437494723,-0.9510565162951536) circle (1pt);
\draw [fill=blue] (0,1.6) circle (1pt);
\draw [fill=blue] (-1.5216904260722457,0.49442719099991606) circle (1pt);
\draw [fill=blue] (-0.9404564036679572,-1.2944271909999159) circle (1pt);
\draw [fill=blue] (0.9404564036679567,-1.294427190999916) circle (1pt);
\draw [fill=blue] (1.521690426072246,0.4944271909999155) circle (1pt);
\draw [fill=red] (-1,0) circle (1pt);
\draw [fill=red] (-0.30901699437494734,-0.9510565162951536) circle (1pt);
\draw [fill=red] (0.8090169943749475,-0.5877852522924731) circle (1pt);
\draw [fill=red] (0.8090169943749475,0.5877852522924731) circle (1pt);
\draw [fill=red] (-0.30901699437494734,0.9510565162951536) circle (1pt);
\draw [fill=red] (0,-1.6) circle (1pt);
\draw [fill=red] (1.5216904260722457,-0.49442719099991583) circle (1pt);
\draw [fill=red] (0.940456403667957,1.294427190999916) circle (1pt);
\draw [fill=red] (-0.9404564036679569,1.294427190999916) circle (1pt);
\draw [fill=red] (-1.521690426072246,-0.49442719099991567) circle (1pt);
\end{tikzpicture}
\caption{Ten red points not on a circle and ten blue points not on a circle with no monochromatic circle}
\end{figure}
\end{example}
In the above example, the union of two circles is an algebraic curve of degree $4$, though not irreducible.
As we now show, there also exist irreducible algebraic curves of degree $4$ (and $2$ and $3$) of red and blue points with no monochromatic circle.
\begin{example}\label{ex:ellipse}
Let $m$ be odd.
On the ellipse with equation $x^2+4y^2=1$, take the $m$ blue points $\left(\cos(\frac{2\pi k}{m}-\frac{\pi}{4}),2\sin(\frac{2\pi k}{m}-\frac{\pi}{4})\right)$, $k=0,\dots,m-1$, and the $m$ red points $\left(\cos(\frac{2\pi k}{m}+\frac{3\pi}{4}),2\sin(\frac{2\pi k}{m}+\frac{3\pi}{4})\right)$, $k=0,\dots,m-1$.
(The blue and red points are disjoint because $m$ is odd.)
Then it can easily be checked that there is no monochromatic circle (Figure~\ref{fig:ellipse}).
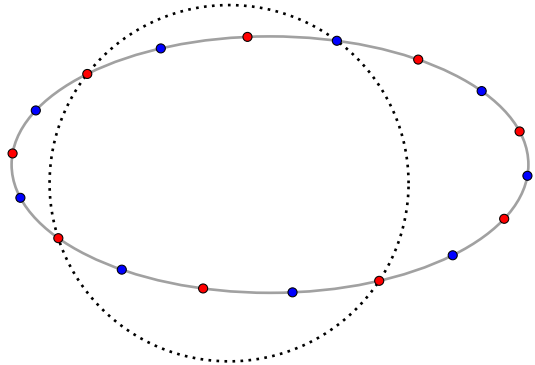
\begin{figure}[h]
    \centering
\definecolor{aqaqaq}{rgb}{0.63,0.63,0.63}
\begin{tikzpicture}[scale=3.4]
\draw [rotate around={0:(0,0)},line width=1pt,color=aqaqaq] (0,0) ellipse (1cm and 0.5cm);
\draw [line width=1pt,dotted] (-0.15840403474356013,-0.07243064403296995) circle (0.6946489053547339cm);
\draw [fill=blue] (0.7071067811865476,-0.35355339059327373)  circle (0.5pt);
\draw [fill=blue] (0.9961946980917455,-0.04357787137382909)  circle (0.5pt);
\draw [fill=blue] (0.8191520442889918,0.286788218175523)     circle (0.5pt);
\draw [fill=blue] (0.25881904510252096,0.4829629131445341)   circle (0.5pt);
\draw [fill=blue] (-0.42261826174069933,0.453153893518325)   circle (0.5pt);
\draw [fill=blue] (-0.9063077870366499,0.21130913087034975)  circle (0.5pt);
\draw [fill=blue] (-0.9659258262890684,-0.12940952255126018) circle (0.5pt);
\draw [fill=blue] (-0.5735764363510464,-0.4095760221444958)  circle (0.5pt);
\draw [fill=blue] (0.08715574274765789,-0.4980973490458728)  circle (0.5pt);
\draw [fill=red] (-0.7071067811865475,0.3535533905932738)    circle (0.5pt);
\draw [fill=red] (-0.9961946980917455,0.0435778713738291)    circle (0.5pt);
\draw [fill=red] (-0.8191520442889918,-0.2867882181755231)   circle (0.5pt);
\draw [fill=red] (-0.25881904510252063,-0.48296291314453416) circle (0.5pt);
\draw [fill=red] (0.42261826174069883,-0.45315389351832513)  circle (0.5pt);
\draw [fill=red] (0.90630778703665,-0.2113091308703496)      circle (0.5pt);
\draw [fill=red] (0.9659258262890684,0.12940952255126012)    circle (0.5pt);
\draw [fill=red] (0.5735764363510465,0.40957602214449573)    circle (0.5pt);
\draw [fill=red] (-0.08715574274765776,0.4980973490458728)   circle (0.5pt);
\end{tikzpicture} 
\caption{Nine red points and nine blue points on an ellipse with no monochromatic circle}\label{fig:ellipse}
\end{figure}
\end{example}
\begin{example}
Consider a circular cubic curve $\gamma$ in the real projective plane.
This is defined to be a cubic curve defined by a homogeneous polynomial of degree $3$ of the form $(ux + vy)(x^2 + y^2) + q(x, y, z)z$, where $u, v\in\mathbb{R}$, and $q(x,y,z)$ a quadratic homogeneous polynomial.
Equivalently, a circular cubic is a cubic curve that intersects the line at infinity at the circular points at infinity that lie on all circles.
As described in \cite{LMMSSZ}, there is an abelian group $(\gamma^*,\oplus)$ on the set $\gamma^*$ of non-singular points of this curve with zero element $o$ chosen to be the real point of intersection of $\gamma^*$ with the line at infinity, and such that $4$ points lie on a circle iff their sum in the group is equal to the real point $\omega$ where the tangent to $\gamma$ at $o$ intersects $\gamma$ again.
If $\gamma$ is either smooth or acnodal, then the group on $\gamma^*$ has
finite cyclic subgroups of any order $n$  (see \cite{LMMSSZ} for a description of these curves).

Let $m$ be odd, let $g,h\in\gamma^*$ be elements of order $m$ and $8$, respectively, in $\gamma^*$.
Also, let $\omega/4$ be an element of $\gamma^*$ such that $4\cdot(\omega/4)=\omega$.
Let $B=\{k\cdot g - h + \omega/4 : k\in\mathbb{Z}\}$ be the set of blue points and $R=\{k\cdot g + 3\cdot h +\omega/4 : k\in\mathbb{Z}\}$ the set of red points.
Note that $|B|=|R|=m$, and that $B$ and $R$ are disjoint because $m$ is odd.
Then for any three blue points there exists a red point such that the sum of the three blue points and the red point is equal to $\omega$, giving that the four points lie on a circle.
The same statement holds if the colors are interchanged, so in this set of blue and red points, there is no monochromatic circle.

Examples that lie on bicircular quartics can now be found by applying an inversion of the circular cubic in a point not lying on the curve.
Also, if an acnodal cubic is used in the above example, and the cubic is inverted in its singularity, then the curve transforms into an ellipse, and we regain Example~\ref{ex:ellipse}.
\end{example}
After considering these examples, we may ask what conditions should be imposed on the set of points so that there will be a monochromatic circle if we two-color the points.
More specifically, we can ask the following question.

\begin{question}
    Suppose that in a set of $n$ points in the plane not on a line or a circle there is no monochromatic circle.
    Does it follow that either one of the colors lie on a circle, or all the points lie, up to inversion, on one of the curves in the above examples, namely
    an ellipse, the union of two disjoint circles, a circular elliptic cubic, a circular acnodal cubic, a bicircular elliptic quartic, or a bicircular elliptic acnodal quartic?
\end{question}
We could also replace circle with some other strictly convex closed curve, and then ask if we obtain a monochromatic homothet (translate of a scaled copy) of the curve.
\begin{question}
For which closed strictly convex curves $C$ is the following true:
Given a finite set $B\cup R$ of red and blue points, not all lying on a homothet of $C$, is there a homothet of $C$ that intersects $B\cup R$ in at least $3$ points, all of the same color? 

Is this true for ``almost all'' convex curves?
\end{question}

\section{Monochromatic Conics}
Just as for circles, there are examples of sets of blue and red points on cubic curves, with neither color class on a conic, and such that there is no monochromatic conic.
We consider a monochromatic conic to be a conic passing through at least $5$ points of the set, and such that all points of the set on the conic are of the same color.
\begin{example}
Consider a planar cubic curve $\gamma$ , and let $(\gamma^*,\oplus)$ be the group on its non-singular points such that three points $a,b,c\in\gamma^*$ are collinear iff $a\oplus b\oplus c=0$ in the group.
It is known that $6$ points of $\gamma^*$ lie on a (not necessarily irreducible) conic iff their sum in the group is $0$.
If $\gamma$ is chosen to be smooth or acnodal, the group $\gamma^*$ contains a cyclic subgroup $G_n$ of order $n$ for any value of $n$.
Choose $n=24k$ for some $k\geq 1$, and let $H$ be a subgroup of $G_n$ of index $24$.
We identify $G$ with $(\mathbb{Z}_n,+)$, and then $H$ is the subgroup generated by $24\in\mathbb{Z}_n$.
We choose the following cosets for the blue and red points: $B=5+H$, $R=-1+H$.
Consider a conic that passes through $5$ blue points $b_1,b_2,b_3,b_4,b_5$.
By the theorem of B\'ezout, the conic intersects $\gamma^*$ in $6$ points, which have to be $b_1,\dots,b_5$ and $-(b_1+\dots+b_5)\in -25+H=-1+H=R$.
Thus the conic also passes through a red point.
Similarly, if a conic passes through $5$ red points, then the $6$th point has to be in $5+H=B$, so has to blue.
Thus, there are no monochromatic conics in $B\cup R$, which contains $k$ blue points and $k$ red points, with neither $B$ nor $R$ contained in a conic, since a conic intersects $\gamma$ in at most $6$ distinct points.
\end{example}
Are there any positive results by making stronger assumptions on the given point set?
\begin{question}
Consider a finite set $S$ of points in the plane, each colored blue or red.
Suppose that $S$ does not lie on a cubic curve.
Does there exist a conic that intersects $S$ in at least $5$ points, all of the same color?
\end{question}

\section{Other Related Work}\label{related-work}
We conclude by giving an overview of the other related work, which may lead to interesting directions in connection with the attempt of giving quantitative bounds on the Motzkin-Rabin theorem. 

\paragraph{Bichromatic Ordinary Lines:} A natural variant concerns \emph{bichromatic ordinary lines}, namely lines containing exactly one red and one blue point. Pach and Pinchasi~\cite{PachP00} showed that such lines need not exist. On the other hand, they proved that there always exists a line containing at most two red and at most two blue points. This extends earlier conjectures of Fukuda~\cite{da1998isolating,PachP00}, who asked whether a bichromatic ordinary line must exist when the two colors are separated by a line and their cardinalities differ by at most one; Pach and Pinchasi confirmed the statement without the cardinality restriction.

\vspace{-.3cm}
\paragraph{Sylvester-Gallai and related problems for Other Shapes:} 
A natural direction is to seek Sylvester--Gallai--type statements in which lines are replaced by algebraic curves of higher degree.
Elliott~\cite{elliott1967} initiated the study of such questions for other families of curves by proving an analogue of the Sylvester--Gallai theorem for circles: if a finite set of points in $\mathbb{R}^2$ is not contained in a single line or a single circle, then there exists a circle passing through exactly three of the points. The number three here is significant---it is precisely the number of points that typically determine a circle. In general, one expects a Sylvester--Gallai--type statement for a given family of curves to yield a curve passing through exactly as many points as are normally required to determine one, such a curve being called \emph{ordinary}. Thus, an ordinary line is determined by two points, an ordinary circle by three, and similarly, an ordinary conic (as shown by Wiseman and Wilson~\cite{wiseman1988sylvester}) by five points, since five generically determine a conic (not necessarily irreducible). Recently, Cohen and Zeeuw~\cite{cohen2022sylvester} proved a variant of the Sylvester–Gallai theorem for cubics (algebraic curves of degree three): If a finite set of sufficiently
many points in $\mathbb{R}^2$
is not contained in a cubic, then there is
a cubic that contains exactly nine of the points.

\vspace{-.3cm}
\paragraph{Fractional Sylvester-Gallai and Motzkin-Rabin:}
The Sylvester--Gallai (SG) theorem and its colorful counterpart, the Motzkin--Rabin theorem, have deep modern connections to the theory of locally correctable and locally decodable codes (LCCs and LDCs).  In both settings, the essential phenomenon is that of \emph{local linear dependencies}: in the geometric case, every pair of points determines a third collinear one, while in the coding-theoretic formulation, each symbol of a codeword can be recovered from a few others.  This correspondence, first developed by Barak, Dvir, Yehudayoff, and Wigderson~\cite{BarakDYW11} (see also the survey by Dvir~\cite{Dvir12}), gave rise to quantitative SG-type results over general fields through rank bounds on \emph{design matrices}, which implies strong dimensionality limitations for low-query linear LCCs over the reals and complexes.  Dvir and Tessier-Lavigne~\cite{dvir2015quantitative} later proved a quantitative variant of multi-colored Motzkin-Rabin theorem in the spirit of the work of Barak et al.~\cite{BarakDYW11}. 
Subsequent works refined these methods via rigidity and analytic frameworks, tensor and algebraic techniques, and subspace-evasive constructions (see, e.g.,~\cite{DvirH16, bhattacharyya2011tight, dvir2012subspace, ai2014sylvester}). 
Moreover, these geometric--algebraic ideas have influenced advances in \emph{polynomial identity testing} and \emph{algebraic circuit complexity}~\cite{shpilka2019sylvester, saxena2013sylvester, dvir2014improved, oliveira2024strong}. These works established the connection of classical incidence geometry with modern complexity theory.

\section{Acknowledgement}
The first author acknowledges Ajit A. Diwan and G\'{a}bor Tardos for helpful pointers and comments during the early stage of this work.

%\bibliographystyle{alphaabbrv}
%\bibliography{main}

\input{main.bbl}

\end{document}

%% file: main.bbl
\newcommand{\etalchar}[1]{$^{#1}$}